\numberwithin{equation}{section}
\colorlet{ColorPink}{red!30}
\newcommand{\R}{\mathbb R}
\newcommand{\dif}{\mathrm{d}}
\newcommand{\Reg}{\mathrm{Reg}}
\renewcommand{\dif}{\operatorname{d}\!}
\newcommand{\lebe}{\operatorname{L}}
\newcommand{\sobo}{\operatorname{W}}
\newcommand{\locc}{\operatorname{loc}}
\newcommand{\hold}{\operatorname{C}}
\newcommand{\sg}{\varepsilon}
\newcommand{\ball}{\operatorname{B}}
\newcommand{\A}{\mathcal{A}}
\newcommand{\reg}{\mathrm{Reg}}
\newcommand{\sing}{\mathrm{Sing}}
\newcommand{\dashint}{\fint}
\theoremstyle{plain}
\newtheorem*{theorem*}{Theorem}
\newtheorem{theorem}{Theorem}[section]
\newtheorem{lemma}[theorem]{Lemma}
\newtheorem{corollary}[theorem]{Corollary}
\theoremstyle{remark}
\newtheorem{remark}[theorem]{Remark}
\def \red{\textcolor{red}}
\def \green{\textcolor{teal}}
\begin{document}


\title[$\mathcal{A}$-harmonic approximation and partial regularity]{$\mathcal{A}$-harmonic approximation \\ and partial regularity, revisited}
\author[M.~B\"{a}rlin]{Matthias B\"{a}rlin}
\author[F. Gmeineder]{Franz Gmeineder}
\address{M.B. \& F.G.: University of Konstanz, Department of Mathematics and Statistics, Universit\"{a}tsstra\ss e 10, 78464 Konstanz, Germany}
\email{matthias.baerlin@uni-konstanz.de}
\email{franz.gmeineder@uni-konstanz.de}
\author[C.~Irving]{Christopher Irving}
\address{C.I.: TU Dortmund University, Faculty of Mathematics, Vogelpothsweg 87, 44227 Dortmund, Germany}
\email{christopher.irving@tu-dortmund.de}
\author[J.~Kristensen]{Jan Kristensen}
\address{C.I. \& J.K.: University of Oxford, Mathematical Institute, Radcliffe Observatory Quarter, OX2 6HG, Oxford, United Kingdom}
\email{kristens@maths.ox.ac.uk}

\keywords{Partial regularity, quasiconvexity, $\mathcal{A}$-harmonic approximation, dimension reduction}
\maketitle

\begin{abstract}
  We give a direct harmonic approximation lemma for local minima of quasiconvex multiple integrals that entails their $\hold^{1,\alpha}$ or $\hold^{\infty}$-partial regularity. Different from previous contributions, the method is fully direct and elementary, only hinging on the $\lebe^{p}$-theory for strongly elliptic linear systems and Sobolev's embedding theorem. Especially, no heavier tools such as Lipschitz truncations are required.
\end{abstract}

\section{Introduction}
In this paper we provide an alternative approach to arrive at the gradient partial regularity of local minimisers of the basic multiple integral 
\begin{align}\label{eq:mainfct}
\mathscr{F}[u;\Omega]:=\int_{\Omega}F(\nabla u)\dif x,\qquad u\colon \Omega \to \R^N,
\end{align}
for variational integrands $F\colon\R^{N\times n}\to\R$ and open and bounded subsets $\Omega\subset\R^N$. It is well-known that even under strong smoothness and convexity assumptions on $F$, local minima need not be of class $\hold^{\infty}$ in the entire $\Omega$, but merely \emph{partially $\hold^{\infty}$-regular}: There exists an open subset $\Omega_{0}$ of $\Omega$ such that $u$ is of class $\hold^{\infty}$ in $\Omega_{0}$ (see \cite{AcerbiFusco,Beck,CFM,CP,DLSV,Duzaar1,Duzaar2,DuzaarMingione,Evans,FH,GM,Giusti,GK1,Hamburger} for a non-exhaustive list of contributions). To state our result conveniently, we shall make the following assumptions on $F$: There exist $L>0$ and, for each $m>0$, $\ell_{m}>0$ such that 
\begin{align}\label{eq:pgrowth}
\begin{cases}
|F(z)|\leq L(1+|z|^{p})&\text{for all}\;z\in\R^{N\times n}, \\ 
z\mapsto F(z)-\ell_{m} V_{p}(z) &\text{is quasiconvex at all $z_{0}\in\R^{N\times n}$ with  $|z_{0}|\leq m$},\\
 F\in\hold^{2}(\R^{N\times n}).&
\end{cases}
\end{align}
Here, $V_{p}(z):=(1+|z|^{2})^{\frac{p}{2}}-1$, and we recall that a continuous integrand $F\colon\R^{N\times n}\to\R$ is called \emph{quasiconvex at $z_{0}\in\R^{N\times n}$} provided we have 
\begin{align}\label{eq:defquasiconvexity}
F(z_{0})\leq \dashint_{\ball_1 (0)}F(z_{0}+\nabla\varphi)\dif x\qquad\text{for all  $\varphi\in\sobo_{0}^{1,\infty}(\ball_1 (0);\R^N)$}, 
\end{align}
and simply \emph{quasiconvex} provided it is quasiconvex at every $z_{0}$. 
The first two conditions in~\eqref{eq:pgrowth} embody the fact that the variational integral~\eqref{eq:mainfct} is well-defined on subclasses of $\sobo^{1,p}(\Omega;\R^{N})$ by~$\eqref{eq:pgrowth}_{1}$, and  both coercive and lower semicontinuous for weak convergence on Dirichlet subclasses of $\sobo^{1,p}(\Omega;\R^{N})$ by $\eqref{eq:pgrowth}_{2}$ (cf. \cite{CK,Meyers}); the third one is a minimal regularity assumption necessary to carry out the linearisation procedure detailed below. 

In establishing partial regularity theorems, there are by now a wealth of methods, so for instance the \emph{blow-up method} or the \emph{method of $\mathcal{A}$-harmonic approximation}; see \textsc{Beck} \cite{Beck} or \textsc{Mingione} \cite{Mingione} for overviews. The overarching metaprinciple in the $\mathcal{A}$-harmonic approximation technique is the $\mathcal{A}$\emph{-harmonic approximation} lemma.
Broadly speaking, it asserts that if a map is almost $\mathcal{A}$-harmonic in a quantifiable way for a suitable bilinear form $\mathcal{A}\colon\R^{N\times n}\times\R^{N\times n}\to \R$ (to be tacitly identified with its matrix representative), then it is \emph{close} to an $\mathcal{A}$-harmonic map. As usual, the $\mathcal{A}$-harmonicity of a map $u\in\sobo_{\locc}^{1,1}(\Omega;\R^{N})$ means that we have 
\begin{align}\label{eq:Aharmonic}
\int_{\Omega}\mathcal{A}[\nabla u,\nabla\varphi]\dif x := \int_{\Omega}\langle \mathcal{A}\nabla u,\nabla\varphi\rangle\dif x = 0\quad\text{for all}\;\varphi\in\sobo_{0}^{1,\infty}(\Omega;\R^N).
\end{align}

Typically, the aforementioned closeness is measured by a $V$-function type distance, e.g. see~\eqref{eq:Vfunction} below for potential choices of such auxiliary $V$-functions. By now, there are various techniques to arrive at such approximation lemmas, either by \emph{indirect means} (cf.~the approaches by~\textsc{De Giorgi} \cite{DeGiorgi} and~\textsc{Simon} \cite{Simon1,Simon2} for $\mathcal{A}=\mathrm{Id}$ or \textsc{Duzaar} et al. \cite{Duzaar1,Duzaar2,DuzaarMingione} in the more general context of Legendre-Hadamard elliptic bilinear forms) or \emph{direct means}, using the technically more sophisticated method of Lipschitz truncations (cf.~\textsc{Diening} et al.~\cite{DieningCruzzi,DLSV}).  

In this paper, we present a construction of $\mathcal{A}$-harmonic comparison maps that is \emph{fully direct} and does not utilise heavier tools such as Lipschitz truncations, yet yields the necessary estimates to conclude  partial regularity of minima for a wealth of variational problems. The proof itself only uses elementary embeddings for Sobolev spaces and routine results on the solvability of Legendre-Hadamard elliptic systems. The key approximation theorem of this paper is stated most conveniently for Legendre-Hadamard elliptic integrands, the connection of which to partial regularity proofs being discused below. More precisely, we assume slightly more generally than in \eqref{eq:pgrowth} that $F\colon\R^{N\times n}\to\R$ satisfies the following: There exist $L>0$ and, for each $m>0$, a number $\ell_{m}>0$ such that 
\begin{align}\label{eq:pgrowth1}
\begin{cases}
|F(z)|\leq L(1+|z|^{p})&\text{for all}\;z\in\R^{N\times n}, \\ 
z\mapsto F(z)-\ell_{m} V_{p}(z) &\text{is rank-one-convex at all $z_{0}\in\R^{N\times n}$ with  $|z_{0}|\leq m$},\\
 F\in\hold^{2}(\R^{N\times n}).&
 \end{cases}
\end{align}
As usual, rank-one convexity of an integrand $G\colon\R^{N\times n}\to\R$ at some $z_{0}$ means that for all $a\in\R^{N},b\in\R^{n}$ the function $\R\ni t \mapsto G(z_{0}+ta\otimes b)$ is convex. 
Our main theorem then reads as follows:
\begin{theorem*}
Let $F\colon\R^{N\times n}\to\R$ satisfy~\eqref{eq:pgrowth1} and $\ball_{r}(x_{0})\Subset\Omega$ be an open ball. For any $m>0$ and all affine-linear maps $a\colon\R^{n}\to\R^{N}$ with $|\nabla a|\leq m$ there  exist a constant $c=c(n,N,p,m,F)>0$ and a continuous function $H\colon\R_{\geq 0}\to\R_{\geq 0}$ depending on $n,p,m,F$ such that
\begin{align}\label{eq:vanishatzero}
\lim_{t\searrow 0}\frac{H(t)}{t}=0
\end{align}
with the following property: Whenever $u\in\sobo_{\locc}^{1,p}(\Omega;\R^{N})$ is an \emph{extremiser} of~\eqref{eq:mainfct} subject to~\eqref{eq:pgrowth1}, so satisfies \begin{align}\label{eq:extremiser}
\int_{\Omega}\langle F'(\nabla u),\nabla\varphi\rangle\dif x = 0\qquad\text{for all}\;\varphi\in\hold_{c}^{\infty}(\Omega;\R^{N}),
\end{align}
then we have 
\begin{align}\label{eq:AharmonicSharp}
\dashint_{\ball_{r}(x_{0})}V_{p}\Big(\frac{\widetilde{u}-\widetilde{h}}{r}\Big)\dif x \leq cH\Big(\dashint_{\ball_{r}(x_{0})}V_{p}(D\widetilde{u})\dif x\Big), 
\end{align}
where $\widetilde{u}:=u-a$ and $\widetilde{h}$ is the unique solution of the Legendre-Hadamard elliptic system 
\begin{align}\label{eq:systemMain}
\begin{cases} 
-\mathrm{div}(\mathcal{A}\nabla\widetilde{h}) = 0&\;\text{in}\;\ball_{r}(x_{0}), \\ 
\widetilde{h}=\widetilde{u}&\;\text{on}\;\partial\!\ball_{r}(x_{0})
\end{cases}
\end{align}
with $\A=F''(\nabla a)$. Here, we have set for an arbitrary normed space $(X,|\cdot|)$
\begin{align}\label{eq:Vfunction}
V_{p}(z):=\big(1+|z|^{2}\big)^{\frac{p}{2}}-1,\qquad z\in X. 
\end{align}
\end{theorem*}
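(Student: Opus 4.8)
The plan is to linearise the Euler--Lagrange equation \eqref{eq:extremiser} about $\nabla a$, to identify $w:=\widetilde u-\widetilde h$ as the solution of a \emph{constant-coefficient} Legendre--Hadamard elliptic system whose right-hand side is a divergence, and to extract the closeness estimate purely from the $\lebe^{q}$-theory for such systems together with Sobolev's embedding. By the translation $x\mapsto x_{0}+rx$ and the dilation $v\mapsto v(x_{0}+r\,\cdot)/r$ --- under which both sides of \eqref{eq:AharmonicSharp}, the condition \eqref{eq:extremiser} and the system \eqref{eq:systemMain} (with the same $\A$) are invariant --- we reduce to $x_{0}=0$, $r=1$ and put $\Phi:=\dashint_{\ball_{1}}V_{p}(\nabla\widetilde u)\dx$. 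Assumption \eqref{eq:pgrowth1} gives, via the elementary difference-quotient bound for convex functions of one variable applied along rank-one lines to $F-\ell_{m}V_{p}$ together with $\eqref{eq:pgrowth1}_{1}$, the growth $|F'(z)|\le c(1+|z|^{p-1})$; in particular $F'(\nabla u)\in\lebe^{p'}_{\locc}$, so \eqref{eq:extremiser} extends by density to all $\varphi\in\sobo_{0}^{1,p}(\ball_{1};\R^{N})$, and it makes $\A=F''(\nabla a)$ Legendre--Hadamard elliptic with ellipticity $\nu_{m}=\nu_{m}(p,F)>0$ and $|\A|\le\Lambda_{m}:=\sup_{|z|\le m}|F''(z)|<\infty$. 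With $E(z):=F'(\nabla a+z)-F'(\nabla a)-\A z=\int_{0}^{1}\bigl(F''(\nabla a+sz)-F''(\nabla a)\bigr)z\,\dd s$, so that $|E(z)|\le\mu(|z|)|z|$ for the (continuous, nondecreasing, $\mu(0{+})=0$) modulus of continuity $\mu$ of $F''$ near $\nabla a$ and also $|E(z)|\le c(1+|z|^{p-1})$, subtracting the weak form of \eqref{eq:systemMain} from \eqref{eq:extremiser} (recalling that $F'(\nabla a)$ is constant) shows that $w=\widetilde u-\widetilde h\in\sobo_{0}^{1,p}(\ball_{1};\R^{N})$ solves
\begin{align*}
\int_{\ball_{1}}\langle\A\nabla w,\nabla\varphi\rangle\dx=-\int_{\ball_{1}}\langle E(\nabla\widetilde u),\nabla\varphi\rangle\dx\qquad\text{for every }\varphi\in\sobo_{0}^{1,p}(\ball_{1};\R^{N}).
\end{align*}

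Since $\A$ is constant and Legendre--Hadamard elliptic, its symbol $\xi\mapsto\A(\xi)$ is invertible off the origin, so for each $q\in(1,\infty)$ the Dirichlet problem $-\diver(\A^{*}\nabla\phi)=\psi$ on $\ball_{1}$, $\phi\in\sobo_{0}^{1,q}$, has a unique solution with $\|\phi\|_{\sobo^{2,q}(\ball_{1})}\le c(q,\nu_{m},\Lambda_{m},n,N)\|\psi\|_{\lebe^{q}(\ball_{1})}$ --- this classical estimate, together with Sobolev's embedding $\sobo^{1,q}\hookrightarrow\lebe^{q^{\ast}}$, is the only external ingredient. For a suitable exponent $\sigma$ and $\psi\in\lebe^{\sigma'}(\ball_{1};\R^{N})$ with $\|\psi\|_{\sigma'}\le1$ one inserts the associated $\phi$ as test function above, so $\big|\int_{\ball_{1}}w\cdot\psi\dx\big|=\big|\int_{\ball_{1}}\langle E(\nabla\widetilde u),\nabla\phi\rangle\dx\big|$, and then splits $\ball_{1}$ at $\{|\nabla\widetilde u|\le\delta\}$: on this set $|E(\nabla\widetilde u)|\le\mu(\delta)|\nabla\widetilde u|$, contributing $\le c\,\mu(\delta)\,\Phi^{1/\max\{2,p\}}$; on the complement one uses Hölder's inequality with the \emph{extra} integrability $\nabla\phi\in\lebe^{(\sigma')^{\ast}}$ and Chebyshev's bound $|\{|\nabla\widetilde u|>\delta\}|\le\Phi/V_{p}(\delta)$, contributing $\le c(\delta)\,\Phi^{(p-1)/\max\{2,p\}}\bigl(\Phi/V_{p}(\delta)\bigr)^{\theta}$ with $\theta>0$ coming from the gained exponent. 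Taking the supremum over $\psi$ bounds $\|w\|_{\lebe^{\sigma}(\ball_{1})}$; passing back to $\dashint_{\ball_{1}}V_{p}(w)\dx$ via $V_{p}(w)\le c(|w|^{\min\{2,p\}}+|w|^{\max\{2,p\}})$ and Jensen's inequality, the remaining --- and most delicate --- point is to choose $\delta=\delta(\Phi)$ so that the resulting estimate takes the form $\dashint_{\ball_{1}}V_{p}(w)\dx\le cH(\Phi)$ with $H$ as required; undoing the scaling then gives \eqref{eq:AharmonicSharp}. (For $p\ge2$ one may supplement the duality step with the Gårding coercivity inequality, which is available because then $w\in\sobo_{0}^{1,2}(\ball_{1};\R^{N})$.)

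The main obstacle is this last step: securing the \emph{sharp} rate, i.e.\ a function $H$ with $H(t)/t\to0$ rather than merely $H(t)\le ct$. The difficulty is that $E(\nabla\widetilde u)$ is \emph{not} small in any Lebesgue space --- on the super-level set $\{|\nabla\widetilde u|>\delta\}$ it still carries an $\mathcal O(\Phi)$ share of the excess --- so a modular bound $\dashint V_{p}(\nabla w)\lesssim\dashint V_{p}(E(\nabla\widetilde u))$ for the linear system by itself only yields comparability. The required cancellation must be drawn from the divergence structure of the right-hand side, and the duality step above is tailored for exactly this, trading the missing Lebesgue-smallness of $E(\nabla\widetilde u)$ for the higher integrability of the dual solution $\nabla\phi$ on a small-measure set; the crux is then to make the balance of $\mu(\delta)\downarrow0$ against $\Phi/V_{p}(\delta)\downarrow0$ close, simultaneously for all $p\in(1,\infty)$ and with $F''$ only continuous --- which is also what dictates the precise form of the hypotheses \eqref{eq:pgrowth1}. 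This is the point at which earlier, equally ``direct'', treatments resort to Lipschitz truncations.
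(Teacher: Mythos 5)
Your overall strategy --- linearise around $\nabla a$, view $w=\widetilde u-\widetilde h$ as the solution of a constant-coefficient Legendre--Hadamard system with a divergence-form right-hand side, and test it against a suitable solution of the dual problem, exploiting the extra integrability of $\nabla\phi$ granted by the $\sobo^{2,q}$-estimates and Sobolev embedding --- is the same skeleton the paper follows, and the growth bounds you derive for $F'$ and $E(z)=F'_{\nabla a}(z)-F''(\nabla a)z$ match Lemma~\ref{lem:Faux}. But the single step that actually produces a \emph{superlinear} $H$ is handled differently, and you do not complete it, whereas that is the genuine content of the theorem.

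You obtain the gain by a hard splitting of $\ball_1$ at the level set $\{|\nabla\widetilde u|\le\delta\}$, estimating $|E|\le\mu(\delta)|\nabla\widetilde u|$ there and Chebyshev plus an extra H\"older slot from $(\sigma')^*>p$ on the complement, and you then propose to balance $\mu(\delta)\downarrow 0$ against $(\Phi/V_p(\delta))^\theta$ --- but you explicitly leave this balance as the ``most delicate point''. The paper instead avoids any level-set dichotomy: it uses that $\omega_m$ is a bounded, concave modulus, so $\rho:=\omega_m\circ V_p^{-1}$ is concave, and applies Jensen's inequality directly to convert $\dashint\omega_m(|\nabla\widetilde U|)\dx$ into $\rho\bigl(\dashint V_p(\nabla\widetilde U)\dx\bigr)$; raised to a small positive power from H\"older, this factor $\rho(\Phi)^{\theta}\to 0$ as $\Phi\to 0$ is precisely what makes $H$ superlinear, uniformly in $p$ and $n$, with no optimisation over $\delta$. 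This is both cleaner and more robust.

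In fact your splitting runs into a concrete obstruction in the subquadratic range that you did not account for. For $1<p<2$ you cannot bound $\int_{\{|\nabla\widetilde u|>\delta\}}|\nabla\widetilde u|^p\dx$ by $c\,\Phi$: on $\{\delta<|\nabla\widetilde u|\le 1\}$ one has $|\nabla\widetilde u|^p\simeq|\nabla\widetilde u|^{p-2}V_p(\nabla\widetilde u)$, which only gives $\int_{\{|\nabla\widetilde u|>\delta\}}|\nabla\widetilde u|^p\dx\lesssim \delta^{p-2}\Phi$, and the singular factor $\delta^{p-2}$ (negative exponent) makes the balance with $\mu(\delta)$ delicate and $p$-dependent. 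The paper sidesteps this by never taking plain $\lebe^p$-norms of $\nabla\widetilde u$ in the subquadratic regime: it keeps everything in $V_p$-modular form, introduces the truncation operator $\mathbb T_p$ so that $\mathbb T_p(\Psi)\in\lebe^{p'}$ with the modular estimate $V_{p'}(\mathbb T_p(\Psi))\lesssim V_p(\Psi)$, and optimises a Young-inequality parameter $\delta$ (a scalar multiplier on $\Phi$, not a level-set threshold) rather than a spatial decomposition. You would need to either verify the exponent arithmetic carefully --- taking into account the $\delta^{p-2}$ factor --- or adopt the modular/Jensen route. As written, the claim ``the remaining point is to choose $\delta=\delta(\Phi)$'' is the assertion of the theorem, not its proof.
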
 
The previous theorem will be established in Section~\ref{sec:dirappro}, where we also address the dependence of the constant $c$ in Remark~\ref{rem:thm_constant}. In Section~\ref{sec:app}, we sketch how it enters partial regularity proofs for functionals of the form~\eqref{eq:mainfct} in Section \ref{sec:eps_reg}, and it is here where we will see the importance of condition~\eqref{eq:vanishatzero}; see Remark~\ref{rem:important} for more detail. We then conclude with improvements that incorporate uniformly controlled growth in Section \ref{sec:uniformlycontrolledgrowth}, where also links to Hausdorff dimension bounds for the singular set \`{a} la \textsc{Mingione} and the last named author \cite{KristensenMingione} are addressed.

Before we embark on these matters, let us briefly address the entering of the single notions of convexity in the partial regularity proof in the setting of \eqref{eq:pgrowth}. Aiming for an excess decay to access the \textsc{Campanato-Meyers} characterisation of H\"{o}lder continuity, one combines the above theorem with the Caccioppoli inequality of the second kind. Whereas the above theorem holds for extremisers of Legendre-Hadamard elliptic multiple integrals (which is in particular satisfied by strongly quasiconvex integrands), it is the Caccioppoli inequality that seems to require strong quasiconvexity. It is also at the Caccioppoli inequality of the second kind where   minimality instead of extremality is needed: In fact, otherwise the above argument would yield partial regularity of extremisers of strongly quasiconvex integrals, and the latter is well-known to be false by the fundamental work of \textsc{M\"{u}ller \& \v{S}ver\'{a}k} \cite{MullerSverak}. Hence, even though \eqref{eq:pgrowth} implies \eqref{eq:pgrowth1}, we will only address the partial regularity results in Section \ref{sec:app} for local minimisers of multiple integrals satisfying \eqref{eq:pgrowth}.

\section*{Notation}
Our notation is mostly standard, but we wish to emphasize some conventions: We shall frequently write $a\lesssim_{s}b$ to express that we have $a\leq c b$ for the two quantities $a,b$ with a constant $c=c(s)>0$. If $a\lesssim_{s} b$ and $b\lesssim_{s}a$, we write $a\simeq_{s}b$. Moreover, whenever $1<p<n$, we use  $p^{*}=\frac{np}{n-p}$ to denote its Sobolev conjugate exponent as usual. Finally, we denote the (euclidean or Hilbert-Schmidt) inner  product on $\R^{N}$ or on $\mathbb R^{N\times n}$ by $\langle \cdot, \cdot \rangle$, but no ambiguities will arise from this.
\section{Direct approximation by \texorpdfstring{$\mathcal{A}$}{A}-harmonic maps}\label{sec:dirappro}
\subsection{Four elementary lemmas} 
We now provide the proof of the above theorem. To this end, we merely require four elementary ingredients that we display next. We begin with 
\begin{lemma}\label{lem:Vaux}
Let $1<p<\infty$ and let $(X,\lvert\cdot\rvert)$ be a normed space. Then the following hold for all $z\in X$: 
\begin{enumerate}
\item\label{item:Vprops1} $V_{p}(z) \simeq_{p} (\mathbbm{1}_{\{|z|\leq 1\}}|z|^{2} +\mathbbm{1}_{\{|z|>1\}}|z|^{p})$.
\item\label{item:Vprops2} If $p\geq 2$, then $V_{p}(z) \simeq_{p} |z|^{2}+|z|^{p}$.
\item\label{item:Vprops2a} For any $m>0$ we have $V_{p}(z)\simeq_{p,m}|z|^{2}$ for all $z\in X$ with $|z|\leq m$.
\item\label{item:Vprops3} For any $1\leq r \leq \min\{2,p\}$, the function $t\mapsto V_{p}(t^{\frac{1}{r}})$ is convex. 
\item\label{item:Vprops4} $V_{p'}(V_{p}(z)/\lvert z\rvert) \lesssim_{p} V_{p}(z).$
\item\label{item:Vprops4A} $V_{p'}(V'_p (z))\lesssim_{p} V_p (z)$
\item\label{item:Vprops5} $V_{p}(\lambda z)\lesssim_{p} \lambda^{\max\{2,p\}}V_{p}(z)$ for all $\lambda\geq1$. 
\item\label{item:Vprops6} $V_{p}(\lambda z)\lesssim_{p} \lambda^{\min\{2,p\}}V_{p}(z)$ for all $\lambda\leq1$. 
\end{enumerate}
Moreover for $z,w \in X$ we additionally have
\begin{enumerate}\setcounter{enumi}{8}
  \item\label{item:Vprops7} $V_p(z+w) \lesssim_p V_p(z) + V_p(w)$ (subadditivity)
  \item\label{item:Vprops8}$\lvert z\rvert \lvert w\rvert \lesssim_p V_p(z) + V_{p'}(w)$ (Young's inequality).
\end{enumerate}
\end{lemma}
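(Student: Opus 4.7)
The strategy is to prove item (a) as the structural foundation, after which items (b), (c), (g), (h) are immediate corollaries, and items (e), (f), (i), (j) follow by a two-regime case analysis plus standard inequalities; only (d) requires independent work via direct differentiation.

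First I would establish (a) by splitting into the cases $|z|\leq 1$ and $|z|>1$. The scalar function $\psi(t) := (1+t)^{p/2}-1$ satisfies $\psi(t)\simeq_p t$ for $t\in [0,1]$ (by the mean value theorem applied on a compact interval, giving two-sided bounds with constants $p/2$ and, e.g., $p\cdot 2^{p/2-1}/2$), and $\psi(t)\simeq_p t^{p/2}$ for $t\geq 1$ (since $(1+t)^{p/2}\simeq_p t^{p/2}$ in that regime). Substituting $t=|z|^2$ yields (a). Items (b), (c), (g), (h) are then routine: (b) uses that $|z|^2 + |z|^p$ agrees with the piecewise expression in (a) up to a factor of $2$ when $p\geq 2$; (c) is the restriction of the small-$|z|$ regime of (a); and (g)--(h) are pure scaling bounds read off the two regimes of (a).

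For (d), I would check the convexity of $\phi(t):=V_p(t^{1/r}) = (1+t^{2/r})^{p/2}-1$ on $(0,\infty)$ by direct computation. Setting $\alpha := 2/r$, a straightforward differentiation yields
\begin{equation*}
\phi''(t) = \tfrac{p\alpha}{2}\, t^{\alpha-2}(1+t^\alpha)^{p/2-2}\Bigl[(\alpha-1) + \bigl(\tfrac{\alpha p}{2}-1\bigr)t^\alpha\Bigr],
\end{equation*}
and non-negativity of the bracket is equivalent to $\alpha\geq 1$ and $\alpha p\geq 2$, i.e., $r\leq 2$ and $r\leq p$. Convexity in the degenerate endpoint $r=1$ is then recovered directly (or excluded when $p<1$, which does not occur here). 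This is the one place where the bound $r\leq \min\{2,p\}$ is really used, and I expect it to be the most delicate step to get the constants right.

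For (e) and (f), the crucial observation is that both $V_p(z)/|z|$ and $|V'_p(z)| = p|z|(1+|z|^2)^{p/2-1}$ exhibit the same two-regime behavior as a function of $|z|$: each is $\simeq_p |z|$ for $|z|\leq 1$ and $\simeq_p |z|^{p-1}$ for $|z|>1$. Applying $V_{p'}$ via (a) and using the conjugate-exponent identity $(p-1)p' = p$ produces $|z|^2$ in the small regime (matching $V_p(z)\simeq |z|^2$) and $|z|^p$ in the large regime (matching $V_p(z)\simeq |z|^p$). For subadditivity (i), monotonicity of the scalar map $s\mapsto V_p(s)$ and the triangle inequality reduce the claim to $V_p(s+t)\lesssim_p V_p(s)+V_p(t)$ for $s,t\geq 0$, which is then treated by case analysis on whether $s+t\lessgtr 1$ together with the elementary inequality $(s+t)^q\leq 2^{q-1}(s^q+t^q)$ for $q\geq 1$. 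Finally, (j) is Young's inequality $|z||w|\leq |z|^p/p + |w|^{p'}/p'$, valid whenever $|z|,|w|\geq 1$ directly from (a); the mixed subcases (one of $|z|,|w|$ smaller than $1$) are handled by the trivial bounds $|z|\leq |z|^2+1$ or $|z|\leq |z|^p$ according to regime, all of which are absorbed by $V_p(z)+V_{p'}(w)$ via (a).
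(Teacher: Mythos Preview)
Your proposal is correct and follows essentially the same strategy as the paper: establish (a) first as the structural backbone, read off (b), (c), (e), (f), (g), (h) from the two-regime description, handle (j) by Young's inequality plus case analysis, and treat (d) by directly computing the second derivative and reducing to the conditions $r\leq 2$, $r\leq p$. The only notable difference is in (i), where the paper avoids your case analysis via the one-line trick $V_p(z+w)\leq V_p(2z)+V_p(2w)$ (monotonicity in the norm) followed by (g) with $\lambda=2$; your argument also works but is heavier.
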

These assertions are well-known and can be found scattered throughout the literature, however we will outline the proofs for completeness.
\begin{proof}
Property~\ref{item:Vprops1} is a direct consequence of the definition of $V_{p}$, from which~\ref{item:Vprops2},~\ref{item:Vprops2a}, \ref{item:Vprops4} and ~\ref{item:Vprops4A} immediately follow.
By~\ref{item:Vprops1} we see that $V_p(z) \simeq_p \max\{\lvert z\rvert^p,\lvert z\rvert^2\}$ for $p>2$ and $V_p(z) \simeq_p \min\{\lvert z\rvert^p,\lvert z\rvert^2\}$ for $p\leq 2$, from where \ref{item:Vprops5} and \ref{item:Vprops6} follow. For \ref{item:Vprops7} we simply note that $V_p(z_1+z_2) \leq V_p(2z_1) + V_p(2z_2)$ and use \ref{item:Vprops5} with $\lambda=2.$
For~\ref{item:Vprops8} we will use~\ref{item:Vprops1} and the corresponding power-type estimates
\begin{equation}\label{eq:younginequality_power}
  \lvert z\rvert \lvert w\rvert \leq 
  \begin{cases} 
    \frac12 \lvert z\rvert^2 + \frac12 \lvert w\rvert^2,  \\
    \frac1p \lvert z\rvert^p + \frac1{p'} \lvert w\rvert^{p'},
  \end{cases}
\end{equation} 
which immediately yield the claim for $p\geq 2$ by \ref{item:Vprops2}. If $p<2$, we distinguish between the four different cases which arise: We use~$\eqref{eq:younginequality_power}_1$ if $\lvert z\rvert \geq 1$ and~$\eqref{eq:younginequality_power}_2$ if $\lvert z\rvert \leq 1.$
Finally for~\ref{item:Vprops3}, for $t>0$ we can compute
\begin{align*}
\frac{\dif^{2}}{\dif t^{2}}V_{p}(t^{\frac{1}{r}})= \frac{1-r}{r^{2}}t^{\frac{1-2r}{r}}V'_{p}(t^{\frac{1}{r}}) + \frac{1}{r^{2}}t^{\frac{2-2r}{r}}V''_{p}(t^{\frac{1}{r}}). 
\end{align*}
Substituting $\xi=t^{\frac{1}{r}}$ this expression is non-negative if and only if 
\begin{align}\label{eq:derivativesdtermine}
(r-1)\stackrel{!}{\leq} \frac{\xi V''_{p}(\xi)}{V'_{p}(\xi)}=\frac{(p-1)\xi^{2}+1}{\xi^{2}+1}\qquad \text{for all}\;\xi>0. 
\end{align}
It is easily seen that the ultimate expression is decreasing in $\xi>0$ for $p<2$, leading to its infimal value $(p-1)$ by sending $\xi\to\infty$, and non-decreasing in $\xi>0$ provided $1<p\leq 2$, leading to its infimal value $1$ by inserting $\xi=0$. Summarising, the right-hand side of \eqref{eq:derivativesdtermine} is bounded below by $\min\{p,2\}-1$, and so $t\mapsto V_{p}(t^{\frac{1}{r}})$ is convex if and and only if $r\leq \min\{2,p\}$ as claimed.
The proof is complete.
\end{proof}
Given $F\colon\R^{N\times n}\to\R$ of class $\hold^{1}$  and $w\in\R^{N\times n}$, we denote 
\begin{align}\label{eq:linearisedintegrands}
F_{w}(z):=F(w+z)-F(w)-\langle F'(w), z\rangle,\qquad z\in\R^{N\times n},
\end{align}
the corresponding \emph{shifted integrand}. We then have 
\begin{lemma}\label{lem:Faux}
  Let $1<p<\infty$ and suppose that $F\in\hold^{2}(\R^{N\times n})$ satisfies $\eqref{eq:pgrowth1}_1$ and  $\eqref{eq:pgrowth1}_2$. Then for each $m>0$ there exist constants $L_m >0$, $ \nu_m > 0,$ and a non-negative,  non-decreasing concave function $\omega_m \colon [0,\infty) \to [0,2]$ with  $\lim_{t\searrow 0}\omega_m(t)=0$ such that for all $w\in\R^{N\times n}$ with $|w|\leq m$ the following hold for all $z\in\R^{N\times n}$:
\begin{enumerate}
\item\label{item:Fprops1} $|F'_{w}(z)|\leq L_m (\mathbbm{1}_{\{|z|\leq 1\}}|z|+\mathbbm{1}_{\{|z|>1\}}|z|^{\max\{1,p-1\}})$. 
\item\label{item:Fprops2} $|F''_{w}(0)z - F'_{w}(z)|\leq L_m \omega_m(\lvert z\rvert) \lvert z\rvert^{\max\{1,p-1\}}$. 
\item\label{item:Fprops3} $\nu_m \lvert \xi\rvert^2 \lvert\eta\rvert^2 \leq \langle F''_w(0) (\xi\otimes\eta), (\xi\otimes\eta)\rangle \leq L_m \lvert \xi\rvert^2 \lvert\eta\rvert^2$ for all $\xi \in \R^N,$ $\eta \in \R^n.$
\end{enumerate}
Especially, \ref{item:Fprops1}--\ref{item:Fprops3} hold if $F\in\hold^{2}(\R^{N\times n})$ satisfies $\eqref{eq:pgrowth}_{1}$ and $\eqref{eq:pgrowth}_{2}$.
\end{lemma}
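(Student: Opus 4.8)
The plan is to read off all three assertions from the elementary identities $F'_w(z)=F'(w+z)-F'(w)$ and $F''_w(0)=F''(w)$, treating the ranges $|z|\leq 1$ and $|z|>1$ separately: in the first range everything is controlled by $F\in\hold^{2}(\R^{N\times n})$ together with the compactness of $\overline{\ball_{m+1}(0)}$, while in the second range the workhorse is the first-order growth bound
\begin{align*}
|F'(\zeta)|\leq c(L,p,n,N)\bigl(1+|\zeta|^{p-1}\bigr)\qquad\text{for all }\zeta\in\R^{N\times n}.
\end{align*}
I would establish this bound first: it is the by-now-classical consequence of the upper $p$-growth $\eqref{eq:pgrowth1}_{1}$ and the rank-one convexity furnished by $\eqref{eq:pgrowth1}_{2}$, obtained by the usual difference-quotient estimate for $p$-growth integrands carrying the requisite convexity. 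This is the step I expect to demand the most care, as it is essentially the only place where $\eqref{eq:pgrowth1}_{2}$ is used beyond its infinitesimal (Legendre--Hadamard) form; everything downstream is bookkeeping.

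Granting the growth bound, item~\ref{item:Fprops1} is immediate. For $|z|\leq 1$ one writes $F'_w(z)=\int_{0}^{1}F''(w+sz)z\,\dif s$ and bounds by $\bigl(\sup_{|\zeta|\leq m+1}|F''(\zeta)|\bigr)|z|$; for $|z|>1$ one estimates $|F'_w(z)|\leq|F'(w+z)|+|F'(w)|\lesssim_{m,p}1+|z|^{p-1}\lesssim_{m,p}|z|^{\max\{1,p-1\}}$. For item~\ref{item:Fprops2} one uses $F''_w(0)z-F'_w(z)=\int_{0}^{1}\bigl(F''(w)-F''(w+sz)\bigr)z\,\dif s$: when $|z|\leq 1$ this is at most $\widetilde{\omega}_m(|z|)\,|z|$, where $\widetilde{\omega}_m$ is a concave, non-decreasing modulus of continuity of $F''$ on $\overline{\ball_{m+1}(0)}$; when $|z|>1$ it is at most $|F'_w(z)|+|F''(w)|\,|z|\lesssim_{m,p}|z|^{\max\{1,p-1\}}$. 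One then takes $\omega_m$ to be the least concave majorant of a truncation of $\widetilde{\omega}_m$ capped at $2$ and kept bounded below by a positive constant on $[1,\infty)$; this renders the $|z|>1$ contribution absorbable and yields $\omega_m\colon[0,\infty)\to[0,2]$ with $\lim_{t\searrow 0}\omega_m(t)=0$, as required.

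Item~\ref{item:Fprops3} is the most direct. The upper bound is $\langle F''(w)(\xi\otimes\eta),\xi\otimes\eta\rangle\leq|F''(w)|\,|\xi|^{2}|\eta|^{2}\leq L_m|\xi|^{2}|\eta|^{2}$ with $L_m:=\sup_{|\zeta|\leq m}|F''(\zeta)|<\infty$. For the lower bound, differentiating $t\mapsto(F-\ell_m V_p)(w+t\,\xi\otimes\eta)$ twice at $t=0$ and invoking $\eqref{eq:pgrowth1}_{2}$ yields $\langle F''(w)(\xi\otimes\eta),\xi\otimes\eta\rangle\geq\ell_m\langle V_p''(w)(\xi\otimes\eta),\xi\otimes\eta\rangle$, while a direct computation (equivalently, Lemma~\ref{lem:Vaux}) shows $V_p''(w)\geq c(p,m)\,\mathrm{Id}$ for $|w|\leq m$, so one may take $\nu_m:=\ell_m\,c(p,m)$. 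Finally, the concluding clause follows because quasiconvexity of an integrand at a point implies its rank-one convexity at that point; hence $\eqref{eq:pgrowth}_{2}$ applied to $F-\ell_m V_p$ entails $\eqref{eq:pgrowth1}_{2}$, while $\eqref{eq:pgrowth}_{1}$ and $\eqref{eq:pgrowth}_{3}$ coincide with $\eqref{eq:pgrowth1}_{1}$ and $\eqref{eq:pgrowth1}_{3}$.
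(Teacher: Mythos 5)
Your proof is correct and follows the same overall strategy as the paper's: establish the first-order growth bound $|F'(\zeta)|\lesssim 1+|\zeta|^{p-1}$ from $\eqref{eq:pgrowth1}_1$ and rank-one convexity, then split both \ref{item:Fprops1} and \ref{item:Fprops2} into $|z|\leq 1$ (controlled by $\sup|F''|$ and the modulus of continuity of $F''$ on $\overline{\ball_{m+1}(0)}$) and $|z|>1$ (controlled by the first-order growth bound), modifying the modulus so that it is concave, bounded by $2$, and stays away from zero on $[1,\infty)$ to absorb the far-field term.

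The one place you genuinely diverge is \ref{item:Fprops3}, and your route is actually the cleaner one given the lemma's hypotheses. You differentiate $t\mapsto (F-\ell_m V_p)(w+t\,\xi\otimes\eta)$ twice at $t=0$ and use the pointwise rank-one convexity $\eqref{eq:pgrowth1}_2$ directly, together with the elementary lower bound $V_p''(w)[\xi\otimes\eta,\xi\otimes\eta]\geq c(p,m)|\xi|^2|\eta|^2$ for $|w|\leq m$. The paper instead tests the quasiconvexity inequality with $\varphi\in\hold^\infty_c$, derives a G\aa rding inequality for the second variation, and deduces the Legendre--Hadamard bound from that. That argument needs the stronger quasiconvexity hypothesis $\eqref{eq:pgrowth}_2$, whereas the lemma is stated under rank-one convexity $\eqref{eq:pgrowth1}_2$; the intended argument in that case is exactly the one you gave, so your version matches the stated assumptions more faithfully and avoids the detour through the integral functional.
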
 
\begin{proof} 
 Note that the rank-one convexity of $F$ and the growth condition~\eqref{eq:pgrowth} combine to the estimate $|F'(z)|\leq cL(1+|z|^{p-1})$; see, e.g., \cite[Lem.~5.2]{Giusti}.
 Also since $F$ is of class $\hold^2$ on $\mathbb{B}_{m+1}:=\{\xi\in\R^{N\times n}\colon\;|\xi|\leq m+1\}$, we have that $F''$ is bounded and uniformly continuous on $\mathbb{B}_{m+1}$; that is, there is $\Lambda_m>0$ and a modulus of continuity $\omega_m$ as in the statement of the lemma such that
 \begin{align}
   \lvert F''(\xi)\rvert &\leq \Lambda_m,\label{eq:dexter1}\\
   \lvert F''(\xi) - F''(\zeta)\rvert &\leq 2\Lambda_m\omega_m(\lvert \xi-\zeta\rvert)\label{eq:dexter2}
 \end{align}
 for all $\xi,\zeta \in \mathbb{B}_{m+1}.$ Moreover replacing $\omega_m$ by $\min\{2,2t+\omega_m(t)\},$ we can assume that $\omega_m(t)=2$ for all $t \geq 1.$ 
 Now to show~\ref{item:Fprops1}, noting if $|z|\leq 1$, we have $w+tz \in \mathbb{B}_{m+1}$ for all $t \in [0,1]$ and so
\begin{align}\label{eq:firstderivativebound}
\begin{split}
|F'_{w}(z)| & \leq |F'(w+z)-F'(w)| \leq \left\vert \int_{0}^{1}\frac{\dif}{\dif t}F'(w+tz)\dif t\right\vert \\ 
            & \leq \int_{0}^{1}|F''(w+tz)|\,|z|\dif t \leq \Lambda_m|z|, 
            \end{split}
\end{align}
whereas for $|z|\geq 1$ we have 
\begin{align*}
|F'_{w}(z)| \leq |F'(w+z)|+|F'(w)|\,|z|^{p-1} \leq c(n,N,p,m,L)|z|^{p-1}, 
\end{align*}
and so~\ref{item:Fprops1} follows. For~\ref{item:Fprops2}, we note that
\begin{align}\label{eq:joestriani}
  |F''_{w}(0)z - F'_{w}(z)| & = \left\vert \int_{0}^{1}(F''(w)-F''(w+tz))\dif t\right\vert\,|z| \stackrel{\eqref{eq:dexter2}}{\leq}  2\Lambda_m\omega_m(\lvert z\rvert)|z|
\end{align}
provided $|z|\leq 1$ and, again using the bound $|F'(z)|\leq L(1+|z|^{p-1})$, 
\begin{align*}
  |F''_{w}(0)z - F'_{w}(z)| \leq |F''(w)|\,|z|^{p-1} + c(m,L,p)|z| \leq c(m,L,p,\Lambda_m)|z|^{\max\{1,p-1\}} 
\end{align*}
for $|z|\geq 1$. Combining the two cases yields the claim.
Finally~\ref{item:Fprops3} follows by noting that for $\varphi \in \hold^{\infty}_c(\Omega;\R^N)$ the functional
\begin{equation*}
  \mathcal J(t) = \int_{\Omega} F(z_0+t\nabla\varphi) - F(z_0)- \ell_{m} V_p(z_0 + t\nabla\varphi) + \ell_{m} V_p(z_0) \dif x
\end{equation*} 
is non-negative and attains its minimum when $t=0.$
Hence $\mathcal J''(0) \geq 0,$ which implies the G\r{a}rding inequality
\begin{equation*}
  \int_{\Omega} \langle F''(z_0)\nabla\varphi,\nabla\varphi\rangle \dif x \geq  \ell_{m} \, V_p''(z_0) \int_{\Omega}\lvert\nabla\varphi\rvert^2 \dif x,
\end{equation*} 
from which the Legendre-Hadamard condition \ref{item:Fprops3} follows. Since quasiconvexity at $z_{0}$ implies rank-one convexity at $z_{0}$, the proof is complete. 
\end{proof} 
Third, we record a Poincar\'e-Sobolev inequality for $V$-type functionals. The exponents can be improved, but we only require the following basic version:
\begin{lemma}\label{lem:poincaresobolev}
  Let $x_0 \in \R^n$ and $r>0$. Then for any $p\geq 1$ and any $1\leq s \leq \frac{n}{n-1}$ there exists a constant $C=C(n,M,p,s)>0$ such that 
  \begin{equation}\label{eq:Vfunction_poincare}
    \left(\dashint_{\ball_r(x_0)} \left(V_p\left(\frac{u -(u)_{\ball_r(x_0)}}{r}\right)\right)^{s} \dif x\right)^{\frac{1}{s}} \leq C \dashint_{\ball_r(x_0)} V_p(\nabla u) \dif x
  \end{equation} 
  holds for all $u \in \sobo^{1,p}(\ball_r(x_0);\mathbb R^M).$ 
\end{lemma}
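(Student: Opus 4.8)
The plan is to reduce everything to the classical Sobolev–Poincaré inequality on a ball by exploiting the two-sided power bounds for $V_p$ recorded in Lemma~\ref{lem:Vaux}\ref{item:Vprops1}. By a standard scaling and translation argument we may assume $x_0=0$ and $r=1$: replacing $u(x)$ by $u(x_0+rx)$ turns the left-hand side into $\big(\dashint_{\ball_1} (V_p(v-(v)_{\ball_1}))^s\,\dif x\big)^{1/s}$ and the right-hand side into $\dashint_{\ball_1} V_p(\nabla v)\,\dif x$ with $v(x)=u(x_0+rx)/r$ — here the normalisation by $r$ on both sides is exactly what makes the inequality scale-invariant — so it suffices to prove
\[
\Big(\dashint_{\ball_1} \big(V_p(v-(v)_{\ball_1})\big)^{s}\,\dif x\Big)^{1/s} \leq C \dashint_{\ball_1} V_p(\nabla v)\,\dif x, \qquad v\in\sobo^{1,p}(\ball_1;\R^M).
\]

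First I would split according to the size of $p$. If $p\geq 2$, then $V_p(z)\simeq_p |z|^2+|z|^p$ by Lemma~\ref{lem:Vaux}\ref{item:Vprops2}, and the claim follows by applying the ordinary Sobolev–Poincaré inequality separately in $\lebe^{2s}$ and $\lebe^{ps}$ (note $2s\leq 2n/(n-1)\leq 2^*$ and $ps\leq p\,n/(n-1)\leq p^*$ when $p<n$, while for $p\geq n$ one uses a lower Sobolev exponent and Jensen), combined with the elementary fact that raising the Sobolev–Poincaré estimate $\|v-(v)_{\ball_1}\|_{\lebe^{q s}}\lesssim \|\nabla v\|_{\lebe^q}$ to the $q$-th power and using $\|f\|_{\lebe^s}^s = \dashint (f)^s$ gives $\dashint (|v-(v)_{\ball_1}|^q)^s \lesssim (\dashint |\nabla v|^q)^{s}$, so after taking $s$-th roots the right-hand side is linear in $\dashint |\nabla v|^q \lesssim \dashint V_p(\nabla v)$. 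The genuinely delicate case is $1<p<2$, and here I would use Lemma~\ref{lem:Vaux}\ref{item:Vprops3}: the function $t\mapsto V_p(t^{1/r})$ is convex for $r=\min\{2,p\}=p$, i.e. $\Phi(t):=V_p(t^{1/p})$ is convex, increasing, and $\Phi(0)=0$, hence subadditive-free but Jensen-applicable. The point is that $V_p(\nabla v) = \Phi(|\nabla v|^p)$, so $\dashint_{\ball_1} |\nabla v|^p\,\dif x = \dashint \Phi^{-1}(V_p(\nabla v))\,\dif x \leq \Phi^{-1}\big(\dashint V_p(\nabla v)\,\dif x\big)$ by Jensen applied to the concave $\Phi^{-1}$; write $A:=\dashint_{\ball_1}V_p(\nabla v)$, so $\|\nabla v\|_{\lebe^p(\ball_1)}^p \lesssim \Phi^{-1}(A)$.

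Then for $1<p<2$, using $V_p(z)\simeq_p \min\{|z|^2,|z|^p\}\le |z|^p$ (Lemma~\ref{lem:Vaux}\ref{item:Vprops1}), I would estimate, with $ps\le p\,n/(n-1)\le p^*$ (or a subcritical exponent plus Jensen if $p\ge n$),
\[
\Big(\dashint_{\ball_1}\!\big(V_p(v-(v)_{\ball_1})\big)^s\Big)^{1/s} \lesssim \Big(\dashint_{\ball_1}\! |v-(v)_{\ball_1}|^{ps}\Big)^{1/s} \lesssim \|\nabla v\|_{\lebe^p(\ball_1)}^{p} \lesssim \Phi^{-1}(A),
\]
by Sobolev–Poincaré in $\lebe^p$. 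It remains to check $\Phi^{-1}(A)\lesssim_p A$, i.e. $\Phi(t)\gtrsim_p t$ for all relevant $t$; but $\Phi(t)=V_p(t^{1/p})\simeq_p \min\{t^{2/p},t\}$, which is $\simeq_p t$ for $t\le 1$ and $\simeq_p t^{2/p}\ge t$ for $t\ge 1$, so indeed $\Phi^{-1}(A)\lesssim A$ on $A\le 1$ and $\Phi^{-1}(A)=A^{p/2}\le A$ on $A\ge 1$; either way $\Phi^{-1}(A)\lesssim_p A$, which closes the estimate. The main obstacle — and the reason the lemma is phrased with the mean value inside and a power $s$ — is precisely this interplay between the two distinct power regimes of $V_p$ and the scaling: one must be careful that the $r$-normalisation and the exponent $s\le n/(n-1)$ are chosen so that after applying Sobolev–Poincaré the right-hand side comes out exactly \emph{linear} (not superlinear) in $\dashint V_p(\nabla v)$, and the convexity statement \ref{item:Vprops3} together with Jensen is exactly the device that restores this linearity in the subquadratic case. (The constant's dependence on $M$ is harmless, entering only through the vector-valued Sobolev–Poincaré inequality.)
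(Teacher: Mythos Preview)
Your argument for $p\geq 2$ is correct and in fact more elementary than the paper's, which treats all $p>1$ at once via a Riesz-potential/Jensen argument applied directly to $V_p$. The genuine gap is in the subquadratic case $1<p<2$: your key claim $\Phi^{-1}(A)\lesssim_p A$ is false, because you have the asymptotics of $\Phi(t)=V_p(t^{1/p})$ reversed. For $t\leq 1$ one has $t^{1/p}\leq 1$ and hence $\Phi(t)\simeq (t^{1/p})^2=t^{2/p}$, while for $t\geq 1$ one gets $\Phi(t)\simeq (t^{1/p})^p=t$. Since $2/p>1$ here, $\Phi(t)\simeq t^{2/p}\leq t$ for small $t$, and therefore $\Phi^{-1}(A)\simeq A^{p/2}\geq A$ for $A\leq 1$: the inequality goes the wrong way. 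Concretely, if $|\nabla v|\equiv\varepsilon\ll 1$ then $\dashint|\nabla v|^p=\varepsilon^p$ whereas $A\simeq\varepsilon^2$, so $\dashint|\nabla v|^p\lesssim A$ fails.

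This is not a cosmetic slip: the loss originates from the crude bound $V_p(z)\lesssim |z|^p$ on the left-hand side, which discards the quadratic behaviour of $V_p$ near the origin, and you cannot recover it afterwards. The obvious repair---also using $V_p(z)\lesssim|z|^2$ on the left and applying Sobolev--Poincar\'e in $\lebe^2$---is unavailable, since for $p<2$ we only know $\nabla v\in\lebe^p$ and $\|\nabla v\|_{\lebe^2}$ need not be finite. The paper circumvents this by never passing through pure powers: it first proves the case $s=1$ via the pointwise Riesz potential bound $|u(x)|\leq c\int_{\ball}|\nabla u(y)|\,|x-y|^{1-n}\dif y$ together with Jensen's inequality for the convex, increasing function $V_p$ against the normalised measure $|x-\cdot|^{1-n}\dif y$, and then upgrades to $s\leq\frac{n}{n-1}$ by applying the Gagliardo--Nirenberg--Sobolev inequality to the scalar $\sobo^{1,1}$ function $V_p(u)$ and closing with the $V$-function calculus (Lemma~\ref{lem:Vaux}\ref{item:Vprops4A},~\ref{item:Vprops8}). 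This route never requires more gradient integrability than $V_p(\nabla u)\in\lebe^1$.
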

\begin{proof}
  By translating and rescaling we can assume $x_0=0$ and $r=1,$ and that $(u)_{\ball_1(0)}=0.$ Define, for $x\in\ball_{1}(0)$, a measure $\mu_x$ by $\mu_x := |x-\cdot|^{1-n}\mathscr{L}^{n}$. Then we have $c_1 \leq \mu_x (\ball_1 (0))\leq c_2$ for some constants $c_1 , c_2 >0$ independent of $x$. In consequence, by the usual Riesz potential bound with some $c_{0}=c_{0}(n)>0$
  \begin{align*}
  |u(x)| \leq c_0 \int_{\ball_{1}(0)}\frac{|\nabla u(y)|}{|x-y|^{n-1}}\dif y\qquad \text{for $\mathscr{L}^{n}$-almost all}\;x\in \ball_1 (0), 
  \end{align*}
  we deduce by Jensen's inequality and $\widetilde{\mu}_x := \mu_x / \mu_x (\ball_1 (0))$
  \begin{align}\label{eq:just_poincare}\begin{split}
    \int_{\ball_1(0)} V_p\left(u(x)\right) \dif x & \leq \int_{\ball_{1}(0)} V_{p}\left(c_0 \int_{\ball_{1}(0)} \frac{|\nabla u(y)|}{|x-y|^{n-1}}\dif y  \right)\dif x\\ 
    & \leq \int_{\ball_{1}(0)} V_{p}\left(c_0 c_2\int_{\ball_{1}(0)} |\nabla u(y)|\dif \widetilde{\mu}_x (y)  \right)\dif x\\
     & \!\!\!\!\!\!\!\!\!\!\!\!\!\!\stackrel{\text{Lem.  \ref{lem:Vaux}\ref{item:Vprops5}, Jensen}}{\leq} c(n,p) \int_{\ball_1(0)} \int_{\ball_{1}(0)}V_p(\nabla u(y))\dif \mu_{x}(y) \dif x \\
    & \leq c(n,p)\int_{\ball_1(0)} V_p(\nabla u(y))  \dif y, 
    \end{split}
  \end{align}
  the ultimate estimate being a consequence of Fubini's theorem. Now $\nabla \left(V_p(u)\right) = V_p'(u)\nabla u$ lies in $\lebe^1(\ball;\R^{M\times n})$ using Lemma~\ref{lem:Vaux}~\ref{item:Vprops4} and \ref{item:Vprops8}. Using the Gagliardo-Nirenberg-Sobolev inequality in the second step, we then find 
  \begin{align*}
\int_{\ball_1(0)} V_p\left(u\right)^{s} \dif x & \leq c(s)\int_{\ball_1(0)} (V_p\left(u\right)-(V_p (u))_{\ball_1 (0)})^{s} \dif x\\
& + c(s)\Big(\int_{\ball_1 (0)}V_p (u(x))\dif x\Big)^s \\ 
& \!\stackrel{\text{GNS}}{\leq} c(n,s)\Big(\int_{\ball_{1}(0)}  |\nabla (V_p (u))|\dif x \Big)^{s}+ c(s)\Big(\int_{\ball_1 (0)}V_p (u)\dif x\Big)^s \\ 
& \!\stackrel{\eqref{eq:just_poincare}}{\leq} c(n,s)\Big(\int_{\ball_{1}(0)} V'_p (u)|\nabla u|\dif x \Big)^{s}+ c(s)\Big(\int_{\ball_1 (0)}V_p (\nabla u)\dif x\Big)^s \\ &  =: \mathrm{I} + \mathrm{II}. 
  \end{align*}
For term $\mathrm{I}$, we use Lemma~\ref{lem:Vaux}~~\ref{item:Vprops4A} and ~\ref{item:Vprops8} and to estimate
\begin{align*}
    \mathrm{I} & \leq c(n,s)\Big(\int_{\ball_{1}(0)} V_{p'}(V'_p (u))+V_p(|\nabla u|)\dif x \Big)^{s}\\ 
    & \leq c(n,s)\Big(\int_{\ball_{1}(0)} V_{p'}(V'_p (u))\dif x \Big)^s +c(n,s)\Big(\int_{\ball_1 (0)}V_p(\nabla u)\dif x \Big)^{s}\\ 
   &\leq c(n,s,p)\Big(\int_{\ball_{1}(0)} V_p (u)\dif x \Big)^s +c(s)\Big(\int_{\ball_1 (0)}V_p(\nabla u)\dif x \Big)^{s}. 
\end{align*}
Estimating the penultimate term by \eqref{eq:just_poincare} 
and keeping $\mathrm{II}$ as it is, \eqref{eq:Vfunction_poincare} follows. The proof is complete. 
\end{proof}
Fourth, we gather the natural $\lebe^{p}$-estimates for strongly elliptic systems. The following is sketched in \cite[Proposition 2.11]{GK1} using a well-known method of \textsc{Stampacchia} \cite{Stampacchia}; c.f. \cite[Chpt. 10]{Giusti}. 
\begin{lemma}\label{lem:Lpbounds}
Let $\mathcal{A}$ be a bilinear form on $\R^{N\times n}$ that satisfies $\lambda|z|^{2}\leq \langle \mathcal{A}z,z\rangle \leq \Lambda|z|^{2}$ for all rank-one matrices $z\in\R^{N\times n}$ and some $0<\lambda\leq \Lambda<\infty$. Moreover, let $2 \leq p<\infty$ and, for $H\colon\ball_{1}(0)\to\R^{N}$ and  $\vartheta\colon\partial\!\ball_{1}(0)\to\R^{N}$ to be specified below, consider the system 
\begin{align}\label{eq:sys}
\begin{cases} 
-\mathrm{div}(\mathcal{A}\nabla v ) = H & \text{in}\;\ball_{1}(0),\\ 
v= \vartheta &\text{on}\;\partial\!\ball_{1}(0). 
\end{cases}
\end{align}
Then the following hold: 
\begin{enumerate}
\item\label{item:Lpbound2}  For any $H\in\lebe^{p}(\ball_{1}(0);\R^{N})$, there exists a unique (weak) solution 
\begin{align*}
v\in(\sobo_{0}^{1,p}\cap\sobo^{2,p})(\ball_{1}(0);\R^{N})
\end{align*}
of \eqref{eq:sys} with $\vartheta\equiv 0$ and we have $\|v\|_{\sobo^{2,p}(\ball_{1}(0))}\leq c(n,p,\Lambda/\lambda)\|H\|_{\lebe^{p}(\ball_{1}(0))}$.
  \item\label{item:Lpbound1} For any $G\in\sobo^{1,p}(\ball_{1}(0);\R^{N})$, there exists a unique (weak) solution 
  \begin{align*}
  v\in\sobo_{0}^{1,p}(\ball_1(0);\R^{N})
  \end{align*}
  of  \eqref{eq:sys} with $H=-\mathrm{div}(\mathcal{A}\nabla G)$ and $\vartheta\equiv 0$, 
and we have $\|\nabla v\|_{\lebe^{p}(\ball_{1}(0))}\leq c(p,n,\Lambda/\lambda)\| G\|_{\lebe^{p}(\ball_1(0))}$. 
\item\label{item:Lpbound3} For any $\vartheta\in\sobo^{1-1/p,p}(\partial\!\ball_{1}(0);\R^{N})$ there exists a unique (weak) solution $v\in(\hold^{\infty}\cap\sobo_{\vartheta}^{1,p})(\ball_{1}(0);\R^{N})$ of \eqref{eq:sys} with $H\equiv 0$ ,
and we have $\|v\|_{\sobo^{1,p}(\ball_{1}(0))}\leq c(n,p,\Lambda/\lambda)\|\vartheta\|_{\sobo^{1-1/p,p}(\partial\!\ball_{1}(0))}$. 
Moreover, there exists a constant $C(n,p,\Lambda/\lambda)>0$ such that we have for all $0<r<1$ and all $A \in \mathbb R^{N\times n}$
\begin{align}
\sup_{x\in\ball_{r/2}(0)}|\nabla u(x)-A| + r\sup_{x\in\ball_{r/2}(0)}|\nabla^{2}u(x)|\leq c \dashint_{\ball_{r}(0)}|\nabla u - A|\dif y.
\end{align}
\end{enumerate}
\end{lemma}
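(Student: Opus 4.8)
\emph{Plan.} All three assertions belong to the standard $\lebe^{q}$-theory for constant-coefficient strongly elliptic systems on a smooth domain, valid in the full range $1<q<\infty$, and I would only assemble the known ingredients; the details are in \cite[Ch.~10]{Giusti}, \cite{Stampacchia} and \cite[Prop.~2.11]{GK1}. \textbf{The $\lebe^{2}$-theory.} The bilinear form $(v,w)\mapsto\int_{\ball_{1}(0)}\langle\mathcal{A}\nabla v,\nabla w\rangle\dif x$ is bounded on $\sobo_{0}^{1,2}(\ball_1(0);\R^{N})$ and, crucially, coercive there: extending $\varphi\in\hold_{c}^{\infty}(\ball_1(0);\R^{N})$ by zero to $\R^{n}$ and invoking Plancherel's theorem together with $\widehat{\partial_{j}\varphi^{k}}(\xi)=\imag\xi_{j}\widehat{\varphi^{k}}(\xi)$, the rank-one bound $\langle\mathcal{A}z,z\rangle\geq\lambda|z|^{2}$ (reducing to the symmetric part of $\mathcal{A}$, which obeys the same bounds) self-improves to $\int_{\ball_1(0)}\langle\mathcal{A}\nabla\varphi,\nabla\varphi\rangle\dif x\geq\lambda\int_{\R^{n}}|\nabla\varphi|^{2}\dif x$. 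By Lax--Milgram we then get a unique weak solution in $\sobo_{0}^{1,2}(\ball_1(0);\R^{N})$ for any datum in $\sobo^{-1,2}(\ball_1(0);\R^{N})$, which covers $H\in\lebe^{2}$ (for \ref{item:Lpbound2}), $H=-\di(\mathcal{A}\nabla G)$ with $G\in\sobo^{1,2}$ (for \ref{item:Lpbound1}), and, after subtracting an $\sobo^{1,2}$-extension of $\vartheta$, the homogeneous system (for \ref{item:Lpbound3}); since all solution classes below embed into $\sobo_{0}^{1,2}$, respectively $\sobo^{1,2}$, for $p\geq2$, this also settles uniqueness throughout.

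\textbf{Calder\'on--Zygmund estimates (\ref{item:Lpbound2}, \ref{item:Lpbound1}).} For $v\in\sobo_{0}^{1,2}$ solving $-\di(\mathcal{A}\nabla v)=H$ with $H\in\lebe^{2}$, \textsc{Nirenberg}'s difference-quotient method yields $v\in\sobo^{2,2}(\ball_1(0);\R^{N})$ with $\|v\|_{\sobo^{2,2}}\lesssim\|H\|_{\lebe^{2}}$: interior control of $\nabla^{2}v$ is immediate since $\mathcal{A}$ has constant coefficients, and flattening $\partial\ball_1(0)$, taking difference quotients in tangential directions and reading the remaining second derivative off the equation extends this up to the boundary. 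The passage from $p=2$ to $2<p<\infty$ is precisely the interpolation scheme of \textsc{Stampacchia} \cite{Stampacchia} as executed in \cite[Prop.~2.11]{GK1} (cf.\ \cite[Ch.~10]{Giusti}); it delivers \ref{item:Lpbound2} together with its divergence-form companion — unique solvability of $-\di(\mathcal{A}\nabla v)=-\di\Phi$ in $\sobo_{0}^{1,q}$ with $\|\nabla v\|_{\lebe^{q}}\lesssim\|\Phi\|_{\lebe^{q}}$, this being the dual of the $\sobo^{2,q'}$-estimate for the transposed tensor $\mathcal{A}^{\mathsf T}$, which obeys the same rank-one bounds. Applying the latter to $\Phi=\mathcal{A}\nabla G$ gives the existence and (with $\|\nabla G\|_{\lebe^{p}}$ in place of $\|G\|_{\lebe^{p}}$, which is the scaling-correct right-hand side) the estimate asserted in \ref{item:Lpbound1}.

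\textbf{Part \ref{item:Lpbound3}.} Choosing a bounded trace-extension $E\vartheta\in\sobo^{1,p}(\ball_1(0);\R^{N})$ with $\|E\vartheta\|_{\sobo^{1,p}}\lesssim\|\vartheta\|_{\sobo^{1-1/p,p}(\partial\ball_1(0))}$, the map $v-E\vartheta\in\sobo_{0}^{1,p}$ solves the system of \ref{item:Lpbound1} with $G=-E\vartheta$, so the previous step and Poincar\'e's inequality yield $\|v\|_{\sobo^{1,p}}\lesssim\|\vartheta\|_{\sobo^{1-1/p,p}(\partial\ball_1(0))}$. Since $\mathcal{A}$ is constant, each derivative of the $\mathcal{A}$-harmonic map $u$ is again $\mathcal{A}$-harmonic; iterating Caccioppoli's inequality with Sobolev's embedding (and interpolating sup-norms in the familiar way to pass from the $\lebe^{2}$- to the $\lebe^{1}$-average) gives $u\in\hold^{\infty}(\ball_1(0))$ and the Cauchy-type estimates $\sup_{\ball_{\rho/2}(y)}|\nabla^{k}\Psi|\lesssim_{k}\rho^{-k}\dashint_{\ball_{\rho}(y)}|\Psi|\dif x$ for every $\mathcal{A}$-harmonic $\Psi$ on $\ball_{\rho}(y)\Subset\ball_1(0)$ and $k\geq0$. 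For $A\in\R^{N\times n}$ the field $\Psi:=\nabla u-A$ is $\mathcal{A}$-harmonic, being the gradient of the $\mathcal{A}$-harmonic map $x\mapsto u(x)-Ax$ (note $\di(\mathcal{A}A)=0$), so applying these estimates with $k=0,1$ on $\ball_{r}(0)$ gives exactly the displayed inequality.

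\textbf{Main obstacle.} Nothing here is new, so the only genuinely delicate points are the global, up-to-the-boundary $\sobo^{2,2}$-bound on the ball (handled by boundary flattening) and the Stampacchia interpolation upgrading the exponent from $2$ to $p>2$; I do not expect any difficulty beyond quoting these accurately.
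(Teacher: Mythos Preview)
Your proposal is correct and follows essentially the same route as the paper's own sketch in the Appendix, which likewise cites \cite[Ch.~10]{Giusti}, \cite{Stampacchia} and \cite[Prop.~2.11]{GK1} for the Stampacchia interpolation in \ref{item:Lpbound2}, \ref{item:Lpbound1} and defers \ref{item:Lpbound3} to standard interior regularity (there via \cite[Prop.~2.10]{CFM}). Your remark that the right-hand side in \ref{item:Lpbound1} should read $\|\nabla G\|_{\lebe^{p}}$ rather than $\|G\|_{\lebe^{p}}$ is also well taken and is consistent with how the estimate is actually applied in the paper.
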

In the sequel we will use~\ref{item:Lpbound1}, \ref{item:Lpbound2} with a corresponding $V$-functional estimate when $p \geq 2.$ This simply follows by using Lemma~\ref{lem:Vaux}~\ref{item:Vprops2} so for instance in~\ref{item:Lpbound1} we using the cases $2, p \geq 2$ we have
\begin{equation}\label{eq:Lpbound_vfunction}
  \begin{split}
    \int_{\ball_1(0)} V_p(\nabla^2v) \dif x &\simeq \int_{\ball_1(0)} \lvert \nabla^2 v \rvert^2 + \lvert \nabla^2 v \rvert^p \dif x \\
    &\lesssim \int_{\ball_1(0)} \lvert H \rvert^2 + \lvert H \rvert^p \dif x \simeq \int_{\ball_1(0)} V_p(H) \dif x.
  \end{split}
\end{equation}

\subsection{Proof of the theorem} Based on the preceding lemmas, we now give the
\begin{proof}[Proof of the theorem]
For the maps $\widetilde{u},\widetilde{h}$ as in the theorem and any $\varphi\colon\ball_{r}(x_{0})\to\R^{N}$, we rescale to the unit ball and put for $x\in\ball:=\ball_{1}(0)$
\begin{align*}
\widetilde{U}(x):=\frac{1}{r}\widetilde{u}(x_{0}+rx),\;\;\Psi(x):=\frac{1}{r}(\widetilde{u}-\widetilde{h})(x_{0}+rx),\;\;\;\Phi(x):=\frac{1}{r}\varphi(x_{0}+rx). 
\end{align*}
The statement of the theorem will then follow by rescaling to the ball $\ball_{r}(x_{0})$. 
For future reference, we then record that $\widetilde{U}$ satisfies the Euler-Lagrange system 
\begin{align}\label{eq:ELmain}
\int_{\ball}\langle F'_{\nabla a}(\nabla \widetilde{U}),\nabla\Phi\rangle\dif x = 0\qquad\text{for all}\; \Phi\in\hold^{\infty}_{c}(\ball;\R^{N}). 
\end{align}
Moreover, by the growth bound from Lemma~\ref{lem:Faux}~\ref{item:Fprops1} and a density argument, this holds for all $\Phi \in \sobo_0^{1,\max\{p,p'\}}(\ball;\R^N).$
We will now distinguish between the cases $1<p<2$ and $p\geq 2$. 

\emph{Case 1: $1< p<2$.} We define the truncation operator $\mathbb{T}_{p}\colon\R^{N}\to\R^{N}$ via
\begin{align}\label{eq:truncation}
\mathbb{T}_{p}(z):=\begin{cases} 
z&\;\text{if}\;|z|\leq 1,\\
|z|^{p-2}z&\;\text{if}\;|z|>1. 
\end{cases} 
\end{align}
Recalling that $\mathcal{A}:=F''(\nabla a)$, we then  consider the Legendre-Hadamard elliptic system 
\begin{align}\label{eq:auxSys1}
\begin{cases} 
-\mathrm{div}(\mathcal{A}\nabla\Phi)=\mathbb{T}_{p}(\Psi)&\;\text{in}\;\ball,\\
\Phi=0&\;\text{on}\;\partial\!\ball.
\end{cases} 
\end{align}
We note that since $\Psi\in\lebe^{p}(\ball;\R^{N})$, we have $\mathbb{T}_{p}(\Psi)\in\lebe^{p'}(\ball;\R^{N})$ and so Lemma~\ref{lem:Lpbounds}~\ref{item:Lpbound2} implies that there exists a unique solution $\Phi\in(\sobo_{0}^{1,p'}\cap\sobo^{2,p'})(\ball;\R^{N})$ of~\eqref{eq:auxSys1}. Clearly, for any $\delta>0$, $\delta\Phi$ then is the unique solution of the system 
\begin{align}\label{eq:auxSys1a}
\begin{cases} 
-\mathrm{div}(\mathcal{A}\nabla(\delta\Phi))=\delta\mathbb{T}_{p}(\Psi)&\;\text{in}\;\ball,\\
\Phi=0&\;\text{on}\;\partial\!\ball.
\end{cases} 
\end{align}
Moreover since $p' \geq 2,$ by applying Lemma~\ref{lem:Lpbounds}~\ref{item:Lpbound2} with $2,p'$ in place of $p$ and combining with Lemma~\ref{lem:Vaux}~\ref{item:Vprops1} as in \eqref{eq:Lpbound_vfunction}, we have the estimate
\begin{equation}\label{eq:auxSys_modular}
  \int_{\ball} V_{p'}(\delta\nabla^{2}\Phi) \dif x \leq c \int_{\ball} V_{p'}(\delta\mathbb T_p(\Psi)) \dif x.
\end{equation} 
Noting $(\nabla \Phi)_{\ball} = 0$ since $\Phi$ vanishes on $\partial\!\ball,$ we can estimate for any $\delta>0$ \begin{equation}\label{eq:subquad_phi_estimate}
  \begin{split}
    \left(\int_{\ball} V_{p'}(\delta\nabla \Phi)^{\frac{n}{n-1}} \dif x\right)^{\frac{n-1}n} 
    &\stackrel{\text{\eqmakebox[a1][c]{Lem.~\ref{lem:poincaresobolev}}}}{\leq} c \int_{\ball} V_{p'}(\delta\nabla^2\Phi) \dif x \\
    &\stackrel{\text{\eqmakebox[a1][c]{\eqref{eq:auxSys_modular}}}}{\leq} c\int_{\ball} V_{p'}(\delta\mathbb T_p(\Psi)) \dif x, 
  \end{split}
\end{equation} 
where $c$ is independent of $\delta$. Using $\Psi \in \sobo^{1,p}_0(\ball,\R^N)$ as a test function and noting that $\Phi\in(\sobo_{0}^{1,p} \cap \sobo_{0}^{1,p'})(\ball;\R^{N})$ is also admissible for~\eqref{eq:systemMain} and ~\eqref{eq:ELmain} we obtain
\begin{align*}
\int_{\ball}V_{p}(\Psi)\dif x & \stackrel{\text{\eqmakebox[a2][c]{\text{Lem.} \ref{lem:Vaux}\ref{item:Vprops1}}}}{\leq} c \int_{\ball}\langle\mathbb{T}_{p}(\Psi),\Psi\rangle\dif x\\ 
                              & \stackrel{\text{\eqmakebox[a2][c]{\eqref{eq:auxSys1}}}}{=} c\int_{\ball}\langle\mathcal{A}\nabla\Phi,\nabla\Psi\rangle\dif x\\ 
& \stackrel{\text{\eqmakebox[a2][c]{}}}{=} c\int_{\ball}\langle\mathcal{A}\nabla \Psi,\nabla\Phi\rangle\dif x\\ 
& \stackrel{\text{\eqmakebox[a2][c]{\eqref{eq:systemMain},~\eqref{eq:ELmain}}}}{\leq} c\int_{\ball}\langle(F''(\nabla a)\nabla \widetilde{U}-F'_{\nabla a}(\nabla\widetilde{U})),\nabla\Phi\rangle\dif x \\ 
& \stackrel{\text{\eqmakebox[a2][c]{\text{Lem.} \ref{lem:Faux}\ref{item:Fprops2}}}}{\leq} c\int_{\ball}\omega_m(\lvert \nabla\widetilde U\rvert) \lvert \nabla\widetilde U\rvert \lvert\nabla\Phi\rvert\dif x.
\end{align*}
Now for $\delta>0$ to be determined, by Young's inequality (Lemma~\ref{lem:Vaux}~\ref{item:Vprops8}) we have 
\begin{align*}
\int_{\ball} V_{p}(\Psi) \dif x & \leq c \delta \int_{\ball} V_p(\nabla \widetilde U) \dif x + c \delta \int_{\ball} V_{p'}\left( \omega_m(\lvert\nabla \widetilde U\rvert)\, \delta^{-1} \nabla\Phi\right) \dif x \\ & =: c(\mathrm{I}_1 + \mathrm{I}_2).
\end{align*} 
Now by convexity of $V_{p'}$ the second term can be estimated as
\begin{equation}
  \begin{split}
    \mathrm{I}_2 &\stackrel{\text{\eqmakebox[a2][c]{$\omega_m(\cdot)\leq 1$,\;\text{Lem.} \ref{lem:Vaux}\ref{item:Vprops6}}}}{\leq} \delta \int_{\ball} \omega_m(\lvert\nabla \widetilde U\rvert) V_{p'}\left( \delta^{-1} \nabla\Phi \right) \dif x \\
    &\stackrel{\text{\eqmakebox[a2][c]{H\"older}}}{\leq} \| \omega_m(\lvert \nabla\widetilde U\rvert) \|_{\lebe^n(\ball)}\| \delta V_{p'}(\delta^{-1}\nabla\Phi) \|_{\lebe^{\frac{n}{n-1}}(\ball)} \\
    &\stackrel{\text{\eqmakebox[a2][c]{$\omega_m(\cdot)\leq 1$,\,\eqref{eq:subquad_phi_estimate}}}}{\leq} c\left( \int_{\ball} \omega_m(\lvert\nabla\widetilde U\rvert) \dif x \right)^{\frac1n} \int_{\ball} \delta V_{p'}\left( \delta^{-1}\mathbb T_p(\Psi) \right) \dif x \\
    &\stackrel{\text{\eqmakebox[a2][c]{Jensen}}}{\leq} c\Big(\rho\Big( \int_{\ball} V_p(\lvert\nabla\widetilde U\rvert) \dif x \Big)\Big)^{\frac1n} \int_{\ball} \delta V_{p'}\left( \delta^{-1}\mathbb T_p(\Psi) \right) \dif x,
  \end{split}
\end{equation}
  where $\rho = \omega_m \circ V_p^{-1},$ noting it is concave.
  Now we use Lemma~\ref{lem:Vaux}~\ref{item:Vprops5}, \ref{item:Vprops6} distinguishing between the cases $\delta \leq 1$ and $\delta > 1$ to estimate
  \begin{equation}
    \begin{split}
      \int_{\ball} \delta V_{p'}\left( \delta^{-1}\mathbb T_p(\Psi) \right) \dif x
      &\leq c\left( \delta^{1-p'} \mathbbm{1}_{\{\delta < 1\}} + \delta^{-1} \mathbbm{1}_{\{\delta \geq 1\}} \right) \int_{\ball} V_{p'}\left( \mathbb T_p(\Psi) \right) \dif x\\
      &\leq c \left( \delta^{1-p'} \mathbbm{1}_{\{\delta < 1\}} + \delta^{-1} \mathbbm{1}_{\{\delta \geq 1\}} \right) \int_{\ball} V_{p}\left( \Psi \right) \dif x,
    \end{split}
  \end{equation}
  where we used the estimate $V_{p'}(\mathbb T_p(\Psi)) \lesssim_{p} V_p(\Psi),$ which follows from  Lemma~\ref{lem:Vaux}~\ref{item:Vprops1}.
  Incorporating this into the estimate for $\mathrm{I}_2,$ we arrive at
  \begin{equation}
    \begin{split}
    &\int_{\ball} V_p(\Psi) \dif x 
    \leq c\,\delta \int_{\ball} V_p(\nabla\widetilde U) \dif x \\
    &\qquad+ c \left( \delta^{\red{1-p'}} \mathbbm{1}_{\{\delta < 1\}} + \delta^{\red{-1}} \mathbbm{1}_{\{\delta \geq 1\}} \right) \,\Big(\rho\Big( \int_{\ball} V_p(\nabla\widetilde U) \dif x \Big)\Big)^{\frac 1n} \int_{\ball} V_p(\Psi) \dif x\\
    &\qquad=: \delta A + \left( \delta^{1-p'} \mathbbm{1}_{\{\delta < 1\}} + \delta^{-1} \mathbbm{1}_{\{\delta \geq 1\}} \right) B.
  \end{split}
  \end{equation} 
  Now we wish to extremise $\delta.$ If $A > B$ we take $\delta = (B/A)^{\frac1{p'}} < 1,$ and if $A \leq B$ we take $\delta = (B/A)^{\frac12}.$ This gives
  \begin{equation}
    \int_{\ball} V_p(\Psi) \dif x \leq 
    \begin{cases}
      A^{\frac1{p}} B^{\frac{1}{p'}} & \text{ if } A > B, \\
      A^{\frac12} B^{\frac12} & \text{ if } A \leq B,
    \end{cases}
  \end{equation} 
  so noting that $\rho(\cdot) \leq 1$ we arrive at
  \begin{equation}
    \int_{\ball} V_p(\Psi) \dif x \leq c\,\left(\rho\Big( \int_{\ball} V_p(\nabla\widetilde U) \dif x \Big)\right)^{\frac {p-1}{n}}\int_{\ball} V_p(\nabla\widetilde U) \dif x.
  \end{equation} 
  Hence the result follows by taking $H(t) = \rho(t)^{\frac{p-1}{n}} t.$

\emph{Case 2. $p \geq 2$.} In this case, we put slightly different from above $p_*:=\frac{np}{n+p}$ so that $1<p_*<\min\{n,p\}$ and $(p_*)^{*}=p$. Note that $\Psi$ satisfies 
\begin{align}\label{eq:superquadratic_comparision}
\begin{cases} 
-\mathrm{div}\big(\mathcal{A}\nabla\Psi\big) = -\mathrm{div}(\mathcal{A}\nabla\widetilde{U})&\;\text{in}\;\ball,\\ 
\Psi = 0&\;\text{on}\;\partial\!\ball 
\end{cases} 
\end{align}
by definition of $\Psi$. In consequence, Lemma~\ref{lem:Lpbounds}~\ref{item:Lpbound1} yields by Sobolev's inequality
\begin{align}\label{eq:plarger2A}
  \|\Psi\|_{\lebe^{p}(\ball)} \stackrel{(p_*)^{*}=p}{\leq} c \|\nabla\Psi\|_{\lebe^{p_*}(\ball)} \stackrel{\text{Lem.~\ref{lem:Lpbounds}~\ref{item:Lpbound1}}}{\leq} c\|\nabla\widetilde{U}\|_{\lebe^{p_*}(\ball)} \stackrel{\text{H\"older}}{\leq} c\|\nabla\widetilde{U}\|_{\lebe^{q}(\ball)}
\end{align}
for any choice of $q \in (p_*,p).$ Moreover if $p > 2$ we may choose $q > 2,$ so there is $\lambda \in (0,1)$ for which $q=2\lambda + p(1-\lambda).$
Hence
\begin{align}\label{eq:convexityelementary}
t^{q} & = \exp(q\log(t)) \leq \exp(\lambda\log(t^{2})+(1-\lambda)\log(t^{p}))\leq \lambda t^{2}+(1-\lambda)t^{p}
\end{align}
holds for all $t>0$ and thus from~\eqref{eq:plarger2A} we deduce
\begin{align}\label{eq:Vplarger2}
\begin{split}
\int_{\ball}|\Psi|^{p}\dif x & \stackrel{\eqref{eq:plarger2A}}{\leq} c \Big(\int_{\ball}|\nabla\widetilde{U}|^{q}\dif x\Big)^{\frac{p}{q}} \\ & \stackrel{\eqref{eq:convexityelementary}}{\leq} c\Big(\int_{\ball}|\nabla\widetilde{U}|^{2}+|\nabla\widetilde{U}|^{p}\dif x\Big)^{\frac{p}{q}} \stackrel{\text{Lem.~\ref{lem:Vaux}~\ref{item:Vprops2}}}{\leq} c\Big(\int_{\ball}V_{p}(\nabla\widetilde{U})\dif x\Big)^{\frac{p}{q}}. 
\end{split}
\end{align}
Different from case 1, we now let $\Phi$ be the solution of the Legendre-Hadamard elliptic system 
\begin{align}\label{eq:auxsystemB}
\begin{cases} 
-\mathrm{div}\big(\mathcal{A}\nabla\Phi\big) = \Psi&\;\text{in}\;\ball,\\ 
\Phi = 0&\;\text{on}\;\partial\!\ball. 
\end{cases}
\end{align}
We set $p^{\#} := \frac{np}{n-1}$, so that $p^{\#}<p^{*}$.  Then the Poincar\'e-Sobolev inequality (noting $(\nabla\Phi)_{\ball}=0$) and the usual  $\sobo^{2,p}$-estimates imply
  \begin{equation}\label{eq:super_PhiPsi}
    \| \nabla\Phi \|_{\lebe^{p^{\#}}(\ball)} 
    \leq c \|\nabla^2\Phi\|_{\lebe^p(\ball)} 
    \stackrel{\text{Lem.~\ref{lem:Lpbounds}~\ref{item:Lpbound2}}}{\leq} c\|\Psi\|_{\lebe^p(\ball)} \leq c\left( \int_{\ball} V_p(\Psi) \dif x \right)^{\frac1p}.
  \end{equation} 
  We now argue as in the superquadratic case, so we have
  \begin{equation}\label{eq:pgeq2_maincomparision}
    \begin{split}
      \int_{\ball} \lvert \Psi\rvert^2 \dif x
      &\stackrel{\text{\eqmakebox[a2][c]{\eqref{eq:auxsystemB}}}}{=} \int_{\ball} \langle\mathcal A \nabla \Phi, \nabla \Psi \rangle\,\dif x\\
      &\stackrel{\text{\eqmakebox[a2][c]{\eqref{eq:superquadratic_comparision}}}}{=} \int_{\ball} \langle\mathcal A \nabla \widetilde U, \nabla \Phi\rangle \,\dif x\\
      &\stackrel{\text{\eqmakebox[a2][c]{\eqref{eq:auxsystemB}}}}{=} \int_{\ball} \langle(F''(\nabla)\nabla\widetilde U - F_{\nabla a}'(\nabla\widetilde U)), \nabla\Phi\rangle \,\dif x\\
      &\stackrel{\text{\eqmakebox[a2][c]{Lem.~\ref{lem:Faux}~\ref{item:Fprops2}}}}{\leq} c\int_{\ball} \omega_m(\lvert\nabla\widetilde U\rvert)\lvert \nabla\widetilde U\rvert^{p-1} \lvert\nabla\Phi\rvert \,\dif x\\
      &\stackrel{\text{\eqmakebox[a2][c]{H\"older}}}{\leq} c\|\omega_m(\lvert\nabla\widetilde U\rvert)\|_{\lebe^{np}(\ball)} \| \nabla \widetilde U\|_{\lebe^p(\ball)}^{p-1} \|\nabla\Phi\|_{\lebe^{p^{\#}}(\ball)}, 
    \end{split}
  \end{equation}
  noting that $\frac{1}{np}+\frac{1}{p'}+\frac{1}{p^{\sharp}}=1$. We now estimate the three terms separately; applying Jensen to the concave function $\rho = \omega_m \circ V_p^{-1} \leq 1,$ noting $\lvert \cdot\rvert^p \leq V_p$ by Lemma~\ref{lem:Vaux}~\ref{item:Vprops1}, and using \eqref{eq:super_PhiPsi} we estimate
  \begin{align}\label{eq:Psi2_estimate}
  \begin{split}
    \int_{\ball} \lvert \Psi\rvert^2 \dif x
    & \leq c \left(\rho\Big(\int_{\ball} V_p(\nabla\widetilde U) \dif x\Big)\right)^{\frac1{np}}\times \\ & \times\left( \int_{\ball} V_p(\nabla\widetilde U) \dif x \right)^{\frac{p-1}{p}}\left( \int_{\ball} V_p(\Psi) \dif x \right)^{\frac1p}.
    \end{split}
  \end{align} 
  If $p=2$ we have $V_2(z) = \lvert z\rvert^2$ so the result follows with $H(t) = \rho(t)^{\frac1n}t.$
  Else if $p>2,$ combining \eqref{eq:Vplarger2} and \eqref{eq:Psi2_estimate} using Lemma~\ref{lem:Vaux}~\ref{item:Vprops1} we deduce that
  \begin{equation}
    \begin{split}
      \int_{\ball} V_p(\Psi) \dif x
      &\leq c \int_{\ball} \lvert \Psi\rvert^2 + \lvert \Psi\rvert^p \dif x\\
      &\!\!\!\!\!\!\leq c \left(\rho\Big(\int_{\ball} V_p(\nabla\widetilde U) \dif x\Big)\right)^{\frac1{np}} \left( \int_{\ball} V_p(\nabla\widetilde U) \dif x \right)^{\frac{p-1}p} \left( \int_{\ball} V_p(\Psi) \dif x \right)^{\frac1p} \\
      &\!\!\!\!\!\!\quad+ c \left( \int_{\ball} V_p(\nabla\widetilde U) \dif x \right)^{\frac pq}.
    \end{split}
  \end{equation} 
  Now applying Young's inequality to the first term we can absorb the $V_p(\Psi)$ term to the left-hand side giving
  \begin{align*}
    \int_{\ball} V_p(\Psi) \dif x &\leq c\left(\rho\Big(\int_{\ball} V_p(\nabla\widetilde U)\dif x\Big)\right)^{\frac{1}{n(p-1)}}\int_{\ball} V_p(\nabla\widetilde U)\dif x \| &+ \left( \int_{\ball} V_p(\nabla\widetilde U)\dif x \right)^{\frac pq},
\end{align*}
and so the result follows by taking $H(t) = \rho(t)^{\frac{1}{n(p-1)}}t + t^{\frac pq}$ (recall that $p>q$). 
\end{proof} 

\begin{remark}[Dependence of constants]\label{rem:thm_constant}
  A careful analysis of the above proof reveals that the constant $c$ in the main theorem has the dependence
  \begin{equation}
    c = c(n,N,p,L_m / \nu_m) > 0,
  \end{equation} 
  where $L_m, \nu_m > 0$ are given as in Lemma \ref{lem:Faux}.
  This follows by noting that if $\nu_m>0$ is as in Lemma~\ref{lem:Faux}, by considering $\nu_m^{-1}F$ we can assume that $\nu_m=1.$
  In addition, the superlinear function $H$ is seen to take an explicit form depending on $n, p$ and $\omega_m$ only.
\end{remark}
\begin{remark}[Estimates for the gradient]\label{rem:gradientbounds}
  One may also work purely at the level of gradients, and establish an estimate of the form
  \begin{equation}\label{eq:gradient_harmonixapprox}
    \left(\dashint_{\ball} V_p(\nabla \Psi)^{\theta} \dif x\right)^{\frac 1{\theta}} \leq c \widetilde H\left( \dashint_{\ball} V_p(\nabla \widetilde U) \dif x \right).
  \end{equation}
  with $\theta = \frac{n}{n+1} \in (0,1),$ with $\widetilde H$ a superlinear function.
  This is in the spirit of the sharp $\mathcal{A}$-harmonic approximations results of \textsc{Cruz-Uribe \& Diening} \cite{DieningCruzzi}; in particular, it shows  that the problem of regularity is solely one of higher integrability, and oscillation effects are not an issue.

  The argument is similar to that of the main theorem, where we instead choose our test function $\Phi$ to satisfy the auxiliary system
    \begin{equation}\label{eq:gradient_aux_general}
    \begin{cases} 
      -\mathrm{div}\big(\mathcal{A}\nabla\Phi\big) = -\mathrm{div} \,G&\;\text{in}\;\ball,\\ 
      \Phi = 0&\;\text{on}\;\partial\!\ball, 
    \end{cases}
  \end{equation}
  where $G$ is chosen to satisfy
  \begin{equation}
    G \cdot \nabla\Psi = \begin{cases} V_p(\nabla\Psi)^{\frac{n}{n+1}} &\;\text{if}\; p < 2,\\ \lvert \nabla\Psi \rvert^{\frac{n}{n+1}} &\;\text{if}\; p \geq 2. \end{cases}
  \end{equation}
  For our purposes \eqref{eq:AharmonicSharp} is sufficient however, so we will postpone the proof of the case $p\geq 2$ to the appendix, which proceeds along similar lines to above proof.

\end{remark}

\section{Application to partial regularity}\label{sec:app}
\subsection{Partial regularity}\label{sec:eps_reg}
We conclude the paper by outlining how the main theorem can be used to derive the partial regularity of local minimisers of variational integrals subject to~\eqref{eq:pgrowth}.  We state the corresponding result first: 
\begin{theorem}[Partial regularity]\label{thm:PR1}
Let $F\colon\R^{N\times n}\to\R$ satisfy~\eqref{eq:pgrowth}, $\Omega\subset\R^{n}$ be open and bounded and let $u\in\sobo_{\locc}^{1,p}(\Omega;\R^{N})$ be a local minimiser of the variational integral~\eqref{eq:mainfct}. Moreover, denote the \emph{regular set} 
\begin{align*}
\reg_{u}:=\{x\in\Omega\colon\; u\;\text{is of class $\hold^{1,\alpha}$ for all $0<\alpha<1$ in a neighbourhood of $x$}\}
\end{align*}
and hereafter the \emph{singular set}  $\sing_{u}:=\Omega\setminus\Reg_{u}$. Then we have the decomposition  $\sing_u = \sing_{u,1} \cup \sing_{u,2}$ where
\begin{align}\label{eq:charsingset}
\sing_{u,1} & = \left\{x \in \Omega : \liminf_{r \searrow 0}\, \dashint_{\ball_r(x)} V_p(\nabla u - (\nabla u)_{\ball_r(x)}) \dif y > 0 \right\}, \\
\label{eq:charsingset2}
\sing_{u,2} & = \left\{x \in \Omega : \liminf_{r \searrow 0}\,\lvert(\nabla u)_{\ball_r(x)}\rvert = +\infty \right\}.
\end{align}
Especially, $\reg_{u}$ is relatively open in $\Omega$,  $\mathscr{L}^{n}(\sing_{u})=0$ and hence $u$ is of class $\hold^{1,\alpha}$ in $\reg_{u}$ for any $\alpha\in(0,1)$.
\end{theorem}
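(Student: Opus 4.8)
The strategy is the standard excess-decay scheme of the $\mathcal{A}$-harmonic approximation method, now fed by the main theorem of Section~\ref{sec:dirappro} in place of a Lipschitz-truncation-based approximation. Fix a point $x_0 \notin \sing_{u,1}\cup\sing_{u,2}$; by the two characterisations there is a sequence $r_j\searrow 0$ along which $(\nabla u)_{\ball_{r_j}(x_0)}$ stays bounded, say $|(\nabla u)_{\ball_{r_j}(x_0)}|\le m$, while the excess $E(x_0,r_j):=\dashint_{\ball_{r_j}(x_0)}V_p(\nabla u-(\nabla u)_{\ball_{r_j}(x_0)})\dif y\to 0$. On such a ball set $a$ to be the affine map with $\nabla a=(\nabla u)_{\ball_{r_j}(x_0)}$, $\widetilde u=u-a$, and let $\widetilde h$ solve the frozen Legendre--Hadamard system \eqref{eq:systemMain} with $\A=F''(\nabla a)$. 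The main theorem gives $\dashint_{\ball_{r}(x_0)}V_p\bigl((\widetilde u-\widetilde h)/r\bigr)\dif x \le cH(E(x_0,r))$ with $H(t)/t\to 0$; Lemma~\ref{lem:Lpbounds}\ref{item:Lpbound3} gives interior $\hold^\infty$-estimates for $\widetilde h$, in particular the Campanato-type decay $\dashint_{\ball_{\tau r}(x_0)}|\nabla\widetilde h-(\nabla\widetilde h)_{\ball_{\tau r}(x_0)}|\dif y\lesssim \tau\, \dashint_{\ball_r(x_0)}|\nabla\widetilde h-(\nabla\widetilde h)_{\ball_r(x_0)}|\dif y$ for any $\tau\in(0,\tfrac12)$.

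Next I would run the iteration. Combining the harmonic-approximation estimate with the $\widetilde h$-decay and the Caccioppoli inequality of the second kind (valid for \emph{minimisers} of strongly quasiconvex integrands satisfying \eqref{eq:pgrowth}, which controls $\dashint_{\ball_{r/2}}V_p(\nabla u-\nabla a)$ by $\dashint_{\ball_{r}}V_p((u-a)/r)$) yields an excess-improvement inequality of the form
\begin{equation*}
E(x_0,\tau r)\le C_*\,\bigl(\tau^{\gamma}+\tau^{-\kappa}H(E(x_0,r))/E(x_0,r)\bigr)\,E(x_0,r)
\end{equation*}
for suitable structural exponents $\gamma>0$ (coming from the $V_p$-version of the linear decay) and $\kappa>0$. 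Because of \eqref{eq:vanishatzero}, first choose $\tau$ small so that $C_*\tau^{\gamma}\le\tfrac14$, then choose $\varepsilon_0>0$ so small that $C_*\tau^{-\kappa}H(s)/s\le\tfrac14$ whenever $s\le\varepsilon_0$; this is precisely the point where the superlinearity of $H$ is indispensable (cf.\ Remark~\ref{rem:important}). One also needs the smallness to propagate, i.e.\ that $|(\nabla u)_{\ball_{\tau r}(x_0)}-(\nabla u)_{\ball_r(x_0)}|$ is controlled by a geometric series in the excesses so the frozen coefficients $F''(\nabla a)$ stay in a region where Lemma~\ref{lem:Faux}'s ellipticity constants $\nu_m,L_m$ are uniform; this fixes $m$ (say $m=|(\nabla u)_{\ball_{r_0}(x_0)}|+1$) on a small enough initial ball. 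Iterating, $E(x_0,\tau^k r_0)\le 2^{-k}E(x_0,r_0)$, and since the mean gradients converge, the same smallness holds in a whole neighbourhood of $x_0$ by absolute continuity of the integral and continuity of $r\mapsto (\nabla u)_{\ball_r(\cdot)}$. Hence $\Omega\setminus(\sing_{u,1}\cup\sing_{u,2})$ is open and the excess decays like a power of the radius there, so the Campanato--Meyers integral characterisation of Hölder continuity gives $\nabla u\in\hold^{0,\alpha}_{\locc}$ for every $\alpha\in(0,1)$, i.e.\ this set is contained in $\reg_u$; combined with the trivial inclusion $\reg_u\subset\Omega\setminus(\sing_{u,1}\cup\sing_{u,2})$ we get equality and the claimed decomposition of $\sing_u$.

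Finally, for the measure statement: $\sing_{u,1}$ has Lebesgue measure zero because at $\mathscr{L}^n$-a.e.\ point $x$ the function $\nabla u$ is approximately continuous (being $\lebe^p$, hence $\lebe^1$), so $\dashint_{\ball_r(x)}|\nabla u-(\nabla u)_{\ball_r(x)}|\to 0$ and a fortiori the $V_p$-excess vanishes; $\sing_{u,2}$ has measure zero since $\lim_{r\searrow 0}(\nabla u)_{\ball_r(x)}=\nabla u(x)$ is finite at every Lebesgue point of $\nabla u$. Thus $\mathscr{L}^n(\sing_u)=0$ and $u\in\hold^{1,\alpha}_{\locc}(\reg_u;\R^N)$ for all $\alpha\in(0,1)$. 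The main obstacle is the bookkeeping that keeps the frozen ellipticity constants uniform throughout the iteration and simultaneously forces the excess smallness into a full neighbourhood rather than just along the subsequence $r_j$; the harmonic approximation itself is now a black box, and the only genuinely nontrivial analytic input beyond it is the Caccioppoli inequality of the second kind, which is where minimality (as opposed to mere extremality) is used.
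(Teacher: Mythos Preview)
Your proposal is correct and follows essentially the same route as the paper: combine the main $\mathcal{A}$-harmonic approximation theorem with the Caccioppoli inequality of the second kind and the interior $\hold^{\infty}$-estimates for $\mathcal{A}$-harmonic maps to obtain an excess decay inequality of the form \eqref{eq:pre-iterate}, then iterate using the superlinearity \eqref{eq:vanishatzero} to fix $\sigma$ and the smallness threshold. The paper makes the comparison step slightly more explicit by introducing the first-order Taylor affine map $b(x)=\nabla h(x_{0})(x-x_{0})+h(x_{0})$ of $h$ and applying Caccioppoli to $u-b$ (rather than phrasing it via the Campanato decay of $\nabla\widetilde{h}$), but this is only a cosmetic difference in bookkeeping.
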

The previous theorem is essentially due to \textsc{Evans} \cite{Evans} and \textsc{Acerbi \& Fusco} \cite{AcerbiFusco} for the superquadratic regime $p\geq 2$ and due to \textsc{Carozza, Fusco \& Mingione} in the subquadratic case $1<p<2$.
Theorem \ref{thm:PR1} arises as an $\varepsilon$-regularity result, in turn being approached by an excess decay and the Campanato-Meyers characterisation of H\"{o}lder continuity. Since the passage from the excess decay to H\"{o}lder continuity is by now standard, we will focus on the former. 

Besides our main theorem, the present proof of Theorem \ref{thm:PR1} only requires the Caccioppoli inequality of the second kind: Subject to the hypotheses from~\eqref{eq:pgrowth} and given $m>0$, there exists a constant $c=c(n,N,p,L_m/\nu_m)>0$  such that 
\begin{align}\label{eq:Cacc2}
\dashint_{\ball_{r}(x_{0})}V_{p}(\nabla (u-a))\dif x \leq c \dashint_{\ball_{2r}(x_{0})}V_{p}\Big(\frac{u-a}{r}\Big)\dif x 
\end{align}
holds whenever $u\in\sobo_{\locc}^{1,p}(\Omega;\R^{N})$ is a local minimiser of $\mathscr{F}$,  $a\colon\R^{n}\to\R^{N}$ is affine-linear with $|\nabla a|\leq m$, and $\ball_{2r}(x_{0})\Subset\Omega$. To this end, we define the \emph{excess} of $u$ on the ball $\ball_R (x_0)\Subset\Omega$ via 
$$\mathbb{E}_{p}[u;\ball_{R}(x_{0})]:=\dashint_{\ball_R (x_0)}V_{p}(\nabla u-(\nabla u)_{\ball_{R}(x_{0})})\dif x$$ 
and assume that we have for some $M>0$ and $0<\varepsilon<1$
\begin{align}\label{eq:smallnessconds} \begin{split}
&\left\vert \dashint_{\ball_R (x_0)} \nabla u\dif x \right\vert <M,\\
&\dashint_{\ball_{R}(x_{0})}|\nabla u - (\nabla u)_{\ball_{R}(x_{0})}|^p \dif x < \varepsilon.
\end{split}\end{align}
Without loss of generality, we assume $0<R<1$ in the sequel. 
We let $a\colon\R^{n}\to\R^{N}$ be given by $a(x):=(\nabla u)_{\ball_{R}(x_{0})}(x-x_{0})+(u)_{\ball_{r}(x_{0})}$, so that we  have $|\nabla a|\leq M$ by $\eqref{eq:smallnessconds}_{1}$, and write $\widetilde{u} = u -a$ as before. In the following, let $h$ be the unique solution of the Legendre-Hadamard elliptic system 
\begin{align}\label{eq:hdef}
\begin{cases}
-\mathrm{div}(F''((\nabla u)_{\ball_{R}(x_{0})})\nabla h)= 0& \;\text{in}\;\ball_R (x_0), \\ 
h= u&\;\text{on}\;\partial\!\ball_R (x_0).
\end{cases}
\end{align}
In particular, $\widetilde{h}:=h-a$ satisfies 
\begin{align*}
-\mathrm{div}(F''((\nabla u)_{\ball_{R}(x_{0})})\nabla \widetilde{h})= 0\qquad\text{in}\;\ball_R (x_0),
\end{align*}
with its own boundary values along $\partial\!\ball_{R}(0)$. 
As a consequence, we obtain by Lemma \ref{lem:Lpbounds} \ref{item:Lpbound3} for any $1<r\leq p$
\begin{align}\label{eq:hbound0}\begin{split} 
\Big(\dashint_{\ball_{R}(x_{0})}|\nabla\widetilde{h}|\dif x \Big) & \leq  \Big(\dashint_{\ball_{R}(x_{0})}|\nabla\widetilde{h}|^{r}\dif x \Big)^{\frac{1}{r}} \\ 
& \leq c\Big(\dashint_{\ball_{R}(x_{0})}\left\vert\frac{\widetilde{u}}{R}\right\vert^{r} \dif x + \dashint_{\ball_{R}(x_{0})}|\nabla\widetilde{u}|^{r}\dif x \Big)^{\frac{1}{r}} \\ & \leq c\Big(\dashint_{\ball_{R}(x_{0})}|\nabla\widetilde{u}|^{r}\dif x\Big)^{\frac{1}{r}} (\stackrel{\eqref{eq:smallnessconds}_{2}}{<} c\varepsilon^\frac{1}{p}<c)
\end{split}
\end{align}
with $c = c(n,N,p,r,L_m/\nu_m)$, where we used Poincar\'{e}'s inequality in the last line noting that $(\widetilde{u})_{\ball_{R}(x_{0})}=0$.
Again by Lemma \ref{lem:Lpbounds} \ref{item:Lpbound3}, this implies
\begin{align}\label{eq:hbound1}\begin{split}
|\nabla \widetilde{h}(x_{0})|  \leq c\dashint_{\ball_{R/2}(x_{0})}|\nabla \widetilde{h}|\dif y  \leq c\Big(\dashint_{\ball_{\ball_{R}(x_{0})}}|\nabla \widetilde{h}|^{p}\dif y\Big)^\frac{1}{p} \stackrel{\eqref{eq:hbound0}}{<}c\varepsilon^\frac{1}{p} 
\end{split}
\end{align}
with a constant $c=c(n,N,p,L_{m}/\nu_{m})>0$, and thus 
\begin{align}\label{eq:hbound2}
|\nabla h(x_{0})| \leq |\nabla \widetilde{h}(x_{0})| + |\nabla a| < c + M =: m, 
\end{align}
fixing the threshold number $m>0$ in the sequel. 

Now let $0<\sigma<\frac{1}{3}$ be arbitrary but fixed. By Jensen's inequality we first deduce for some constant $c=c(p)>0$
\begin{align}\label{eq:jensenoptimal}
\dashint_{\ball_{\sigma R}(x_{0})}V_p (\nabla u - (\nabla u)_{\ball_{R}(x_{0})})\dif x \leq c \inf_{\pi\,\text{affine-linear}}\dashint_{\ball_{\sigma R}(x_{0})}V_p (\nabla (u-\pi))\dif x.
\end{align}
We now set $b(x):=\nabla h(x_{0}) (x-x_{0}) + h(x_{0})$, so that $|\nabla b|<m$ by \eqref{eq:hbound2}.  
Recalling our notation $\widetilde{u}:=u-a$, we then invoke the Caccioppoli inequality~\eqref{eq:Cacc2} with $m>0$ as in ~\eqref{eq:hbound2} to find for $0<\sigma<\frac{1}{3}$
\begin{equation}\begin{split}\label{eq:excess_bound1}
\mathbb{E}_{p}[u;\ball_{\sigma R}(x_{0})] & \stackrel{\eqref{eq:Cacc2},\,\eqref{eq:jensenoptimal}}{\leq}c\dashint_{\ball_{2\sigma R}(x_{0})} V_{p}\Big(\frac{u-b}{\sigma R} \Big)\dif x\\ & \;\;\;\; =  c\dashint_{\ball_{2\sigma R}(x_{0})}V_{p}\Big(\frac{(\widetilde{u}-\widetilde{h})-(b-h)}{\sigma R} \Big)\dif x\\
& \!\!\!\stackrel{\text{Lem.}~\ref{lem:Vaux}~\ref{item:Vprops8}}{\leq} c\dashint_{\ball_{2\sigma R}(x_{0})}V_{p}\Big(\frac{\widetilde{u}-\widetilde{h}}{\sigma R}\Big)\dif x +c\dashint_{\ball_{2\sigma R}(x_{0})}V_{p}\Big(\frac{b-h}{\sigma R} \Big)\dif x \\
&\!\!\!=: \mathrm{I} + \mathrm{II}.
\end{split}\end{equation}
The estimation of $\mathrm{I}$ is carried out by virtue of our main theorem, whereas $\mathrm{II}$ is dealt with by routine bounds on $\mathcal{A}$-harmonic maps. More precisely, recalling the function $H$ from the main theorem, we have 
\begin{equation}\begin{split}\label{eq:excess_I1bound}
\mathrm{I} & \stackrel{\text{Lem.}~\ref{lem:Vaux}~\ref{item:Vprops5}}{\leq} \frac{c}{\sigma^{n+\max\{2,p\}}} \dashint_{\ball_{R}(x_{0})}V_{p}\Big(\frac{\widetilde{u}-\widetilde{h}}{R}\Big)\dif x \\ & \;\;\;\;\;\leq \frac{c}{\sigma^{n+\max\{2,p\}}} H\Big(\dashint_{\ball_{R}(x_{0})}V_{p}(\nabla\widetilde{u})\dif x \Big),
\end{split}\end{equation}
with $c = c(n,N,p,L_m/\nu_m)$ by Remark \ref{rem:thm_constant}. We turn to the estimation of $\mathrm{II}$. For future reference, we fix $r:=\min\{2,p\}$, so that the function $\mathfrak{V}_{p}(t):=V_{p}(t^{\frac{1}{r}})$ is convex by Lemma \ref{lem:Vaux} \ref{item:Vprops4}. We first note that we have for any $x\in\ball_{2\sigma R}(x_{0})$
\begin{align}\label{eq:harmonicbounds}
\begin{split}
\sup_{x\in\ball_{2\sigma R}(x_{0})}\frac{|b(x)-h(x)|}{\sigma R} & \leq c\sup_{x\in\ball_{2\sigma R}(x_{0})}\frac{|x-x_{0}|^{2}}{\sigma R}|\nabla^{2}\widetilde{h}(x)| \\ 
& \!\!\!\!\!\!\!\!\stackrel{\text{Lem. \ref{lem:Lpbounds}\ref{item:Lpbound3}}}{\leq} c\sigma \dashint_{\ball_{R}(x_{0})}|\nabla \widetilde{h}|\dif x \\
& \leq c\sigma \Big(\dashint_{\ball_{R}(x_{0})}|\nabla\widetilde{h}|^{p}\dif x\Big)^{\frac{1}{p}} \leq \mathtt{C},
\end{split}
\end{align}
the ultimate estimate being valid by $\eqref{eq:hbound1}_{2},\eqref{eq:hbound1}_{3}$, where $\mathtt{C}=\mathtt{C}(\green{n},N,p,L_{m}/\nu_{m})>0$ is a constant. Therefore, Lemma \ref{lem:Vaux} \ref{item:Vprops2a} yields for another constant $c=c(\green{n},N,p,m,L_{m}/\nu_{m})>0$ that
\begin{align}\label{eq:excess_I2bound} 
\begin{split}
\mathrm{II} & \leq c\sup_{x\in\ball_{2\sigma R}(x_{0})}\left\vert \frac{b(x)-h(x)}{\sigma R}\right\vert^{2} \stackrel{\eqref{eq:harmonicbounds}_{1},\,\eqref{eq:harmonicbounds}_{2}}{\leq} c\sigma^{2}\Big(\dashint_{\ball_{R}(x_{0})}|\nabla \widetilde{h}(x)|\dif x\Big)^{2} \\ 
& \!\!\!\!\!\!\!\!\!\!\!\!\!\stackrel{\eqref{eq:hbound0},\,\text{Lem. \ref{lem:Lpbounds}\ref{item:Lpbound3}}}{\leq} c\sigma^{2} V_{p}\Big(\dashint_{\ball_{R}(x_{0})}|\nabla \widetilde{h}|\dif x \Big) \\ 
& \!\!\!\!\stackrel{\text{Jensen}}{\leq}  c\sigma^{2}\mathfrak{V}_{p}\Big(\dashint_{\ball_{R}(x_{0})}|\nabla \widetilde{h}|^{r}\dif x \Big)
 \stackrel{\eqref{eq:hbound0}}{\leq}  c\sigma^{2}\mathfrak{V}_{p}\Big(\dashint_{\ball_{R}(x_{0})}|\nabla \widetilde{u}|^{r}\dif x \Big)
\\ & \!\!\!\!\stackrel{\text{Jensen}}{\leq}  c\sigma^{2}\dashint_{\ball_{R}(x_{0})}\mathfrak{V}_{p}(|\nabla\widetilde{u}|^{r})\dif x = c\sigma^{2}\dashint_{\ball_{R}(x_{0})}V_{p}(\nabla\widetilde{u})\dif x
\end{split}
\end{align}
by definition of $\mathfrak{V}_{p}$. Thus, combining the estimates for $\mathrm{I}$ and $\mathrm{II}$ yields
\begin{align*}
\mathrm{I}+\mathrm{II} \leq c\Big(\frac{1}{\sigma^{n+\max\{2,p\}}} H\Big(\dashint_{\ball_{R}(x_{0})}V_{p}(\nabla\widetilde{u})\dif x \Big)+ \sigma^{2}\dashint_{\ball_{R}(x_{0})}V_{p}(\nabla \widetilde{u})\dif x\Big), 
\end{align*}
whereby we conclude with the auxiliary function $\Psi(t):=H(t)/t$ that
\begin{align}\label{eq:pre-iterate}
\mathbb{E}_{p}[u;\ball_{\sigma R}(x_{0})] \leq c\Big(\frac{1}{\sigma^{n+\max\{2,p\}}}\Psi(\mathbb{E}_{p}[u;\ball_{R}(x_{0})]) + \sigma^{2} \Big)\mathbb{E}_{p}[u;\ball_{R}(x_{0})].
\end{align}
Estimate~\eqref{eq:pre-iterate} now gives rise to the following excess decay estimate: 
\begin{corollary}
Let $F\colon \R^{N\times n}\to \R$ be a variational integrand that satisfies~\eqref{eq:pgrowth} and let $\omega_{m}\colon[0,\infty]\to [0,1]$ be the modulus of continuity from Lemma~\ref{lem:Faux}. Given $0<\alpha<1$ and $m>0$, there exist parameters  $0<\sigma=\sigma(n,N,p,\alpha,\ell,L,m,\omega_{m})<1$ and $0<\varepsilon=\varepsilon(n,N,p,\alpha,\ell,L,m,\omega_{m})<1$ such that every local minimiser $u\in\sobo_{\locc}^{1,p}(\Omega;\R^{N})$ satisfies the following: If $x_{0}\in\Omega$ and $0<R_{0}<1$ are such that  $\ball_{r}(x_{0})\Subset\Omega$ and  
\begin{align}
\mathbb{E}_p [u;\ball_{R_{0}}(x_{0})]<\varepsilon\;\;\;\text{and}\;\;\;|(\nabla u)_{\ball_{R_{0}}(x_{0})}|<m
\end{align}
then we have 
\begin{align}\label{eq:decay1}
\mathbb{E}_p [u;\ball_{\sigma R_{0}}(x_{0})] \leq \sigma^{1+\alpha} \mathbb{E}_p [u;\ball_{R_{0}}(x_{0})].
\end{align}
\end{corollary}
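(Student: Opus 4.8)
The plan is to read the corollary off the iteration inequality~\eqref{eq:pre-iterate}, which already encodes all the analytic input (the main theorem, the Caccioppoli inequality of the second kind~\eqref{eq:Cacc2}, and the interior gradient estimates for $\mathcal{A}$-harmonic maps); what remains is a careful choice of parameters in the right order. Throughout write $\Psi(t):=H(t)/t$ for $t>0$ and $\Psi(0):=0$, so that $\Psi$ is continuous at $0$ with $\lim_{t\searrow 0}\Psi(t)=0$ by~\eqref{eq:vanishatzero}, and let $c=c(n,N,p,L_{m'}/\nu_{m'})>0$ and $\gamma:=n+\max\{2,p\}$ denote the constant and exponent appearing in~\eqref{eq:pre-iterate}. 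Here the corollary's $m$ plays the role of the $M$ in the derivation of~\eqref{eq:pre-iterate}, while $m':=m+c_{0}$ is the threshold produced in~\eqref{eq:hbound2}, with $c_{0}=c_{0}(n,N,p,L_{m}/\nu_{m})$; in this way all constants depend on $m$ only through $L_{m'}/\nu_{m'}$ and $\omega_{m'}$, matching the dependencies recorded in the statement.

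I would first check that the hypotheses of the corollary place us in the situation under which~\eqref{eq:pre-iterate} was proved. The bound $|(\nabla u)_{\ball_{R_{0}}(x_{0})}|<m$ is exactly $\eqref{eq:smallnessconds}_{1}$ with $M=m$, while the remaining smallness $\eqref{eq:smallnessconds}_{2}$ follows from $\mathbb{E}_{p}[u;\ball_{R_{0}}(x_{0})]<\varepsilon$. Indeed, setting $g:=\nabla u-(\nabla u)_{\ball_{R_{0}}(x_{0})}$ and splitting $\ball_{R_{0}}(x_{0})$ into $\{|g|\le 1\}$ and $\{|g|>1\}$, Lemma~\ref{lem:Vaux}~\ref{item:Vprops1} controls $|g|^{p}$ by $V_{p}(g)$ on the second set and $|g|^{2}$ by $V_{p}(g)$ on the first; an extra application of H\"older's inequality with exponents $\tfrac{2}{p},\tfrac{2}{2-p}$ on $\{|g|\le 1\}$ (needed only when $1<p<2$) then yields
\begin{align*}
\dashint_{\ball_{R_{0}}(x_{0})}|g|^{p}\,\dif x\lesssim_{p}\big(\mathbb{E}_{p}[u;\ball_{R_{0}}(x_{0})]\big)^{\min\{1,p/2\}}<\varepsilon^{\min\{1,p/2\}},
\end{align*}
which is $<1$ once $\varepsilon$ is small. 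Hence~\eqref{eq:pre-iterate} is available for every $0<\sigma<\tfrac13$.

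Next I would fix the parameters, $\sigma$ before $\varepsilon$. Since $1+\alpha<2$, choose $0<\sigma<\tfrac13$ so small that $c\,\sigma^{2}\le\tfrac12\,\sigma^{1+\alpha}$, i.e.\ $\sigma^{1-\alpha}\le(2c)^{-1}$; this determines $\sigma=\sigma(n,N,p,\alpha,L_{m'}/\nu_{m'})$. With $\sigma$ now frozen, use $\lim_{t\searrow 0}\Psi(t)=0$ to pick $0<\varepsilon<1$ so small that $\Psi(t)\le\tfrac{1}{2c}\,\sigma^{\gamma+1+\alpha}$ for every $0\le t\le\varepsilon$, shrinking $\varepsilon$ further if necessary so that the estimate of the previous paragraph renders $\eqref{eq:smallnessconds}_{2}$ valid; this fixes $\varepsilon=\varepsilon(n,N,p,\alpha,L_{m'}/\nu_{m'},\omega_{m'})$, hence the claimed dependencies.

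Finally I would insert these choices into~\eqref{eq:pre-iterate}: if $\mathbb{E}_{p}[u;\ball_{R_{0}}(x_{0})]<\varepsilon$ and $|(\nabla u)_{\ball_{R_{0}}(x_{0})}|<m$, then $\Psi(\mathbb{E}_{p}[u;\ball_{R_{0}}(x_{0})])\le\tfrac{1}{2c}\sigma^{\gamma+1+\alpha}$, so
\begin{align*}
\mathbb{E}_{p}[u;\ball_{\sigma R_{0}}(x_{0})]
&\le c\Big(\sigma^{-\gamma}\cdot\tfrac{1}{2c}\sigma^{\gamma+1+\alpha}+\sigma^{2}\Big)\mathbb{E}_{p}[u;\ball_{R_{0}}(x_{0})]\\
&\le\big(\tfrac12\sigma^{1+\alpha}+\tfrac12\sigma^{1+\alpha}\big)\mathbb{E}_{p}[u;\ball_{R_{0}}(x_{0})]=\sigma^{1+\alpha}\,\mathbb{E}_{p}[u;\ball_{R_{0}}(x_{0})],
\end{align*}
which is~\eqref{eq:decay1}. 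The argument has no genuine obstacle; the only points calling for care are the order of the quantifiers -- the term $\sigma^{2}$ must be beaten by $\sigma^{1+\alpha}$ \emph{before} $\varepsilon$ is chosen, which works precisely because $\alpha<1$ -- and the passage from the $V_{p}$-excess to the $\lebe^{p}$-excess in the subquadratic regime, which costs only the harmless exponent $\min\{1,p/2\}$.
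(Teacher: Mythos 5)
Your proposal is correct and follows essentially the same route as the paper's proof: read the claim off the iteration inequality~\eqref{eq:pre-iterate}, choosing $\sigma$ first so that $c\sigma^2$ is dominated by $\sigma^{1+\alpha}$ (using $\alpha<1$), and then $\varepsilon$ small enough via $\lim_{t\searrow 0}\Psi(t)=0$. The only difference is cosmetic — you spell out why the $V_p$-excess smallness yields $\eqref{eq:smallnessconds}_2$, where the paper simply absorbs this into its choice of $\varepsilon_0$.
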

\begin{proof}
Let $\alpha\in(0,1)$ be given. We choose $\varepsilon_0 >0$ so small such that $\mathbb{E}[u;\ball_{R_{0}}(x_{0})]<\varepsilon_0$ and $|(\nabla u)_{\ball_{R_{0}}(x_{0})}|<m$ imply the validity of \eqref{eq:pre-iterate} for any $\sigma\in(0,\frac{1}{3})$. Denoting $c>0$ the constant from $\eqref{eq:pre-iterate}$, we choose $\sigma\in(0,\frac{1}{3})$ such that $2c\sigma^2 \leq \sigma^{1+\alpha}$. Since $\Psi$ satisfies $\limsup_{t\searrow 0}\Psi(t)=0$, we find $\varepsilon_1 >0$ such that $0<t<\varepsilon_1$ implies $\Psi(t)\leq \sigma^{n+2+\max\{2,p\}}$. We subsequently choose $
0<\varepsilon < \min\left\{\varepsilon_0, \varepsilon_1 \right\}$, whereby we infer~\eqref{eq:decay1}  from~\eqref{eq:pre-iterate} at once. The proof is complete. 
\end{proof}
Working from the previous corollary, the proof of Theorem~\ref{thm:PR1} is routine, cf. \cite{Beck,G1,GK1}.

\begin{remark}[On condition~\eqref{eq:vanishatzero}]\label{rem:important}
As the above proof shows, the requisite excess decay estimate requires a superlinear decay of $\Psi$ in \eqref{eq:pre-iterate}, as we cannot compensate the blow-up of $\frac{1}{\sigma^{n+2}}$ by a suitable smallness assumption. This superlinear power decay, in turn being manifested by~\eqref{eq:vanishatzero}, is everything we need to make the excess decay work. 
\end{remark} 
\subsection{Uniformly controlled growth}\label{sec:uniformlycontrolledgrowth}
In this section we will assume $p \geq 2$, the subquadratic case being slightly more technical but less natural to have a power-type decay as shall be imposed in the following. More precisely, we will show that the results of Section \ref{sec:eps_reg} can be strengthened subject to the additional hypothesis
\begin{equation}\label{eq:ucontrolled_growth}
  z\mapsto\frac{F''(z)}{(1+\lvert z\rvert)^{p-2}}\quad \text{ is bounded and uniformly continuous on $\R^{Nn}$}, 
\end{equation}
allowing us to discard the part $\sing_{u,2}$ in the description of the singular set~\eqref{eq:charsingset}. Condition~\eqref{eq:ucontrolled_growth} is satisfied for instance if $F \in \hold^3 (\R^{N\times n})$ and
\begin{equation}
  \lvert F'''(z)\rvert \leq L(1 + \lvert z\rvert)^{p-3}\qquad\text{for all}\;z\in\R^{N\times n}.
\end{equation} 
In particular this holds if $F$ is a polynomial, and hence includes the example of \textsc{Alibert-Dacorogna-Marcellini} \cite{AlibertDacorogna,DacorognaMarcellini}. In the following, we say that \eqref{eq:pgrowth} \emph{holds with uniform $\ell(>0)$} provided the constant $\ell_{m}>0$ as in $\eqref{eq:pgrowth}_{2}$ can be chosen independently of $m>0$. 
\begin{theorem}\label{thm:uniform_epsreg}
  Let $F$ satisfy \eqref{eq:pgrowth} with uniform $\ell$ and \eqref{eq:ucontrolled_growth} with $p \geq 2.$
  Then for each $\alpha \in (0,1),$ there is $\sg>0$ such that if $\Omega \subset \R^n$ is open, $u \in \sobo^{1,p}_{\locc}(\Omega;\R^N)$ is a local minimiser of \eqref{eq:mainfct}, and $\ball_R(x_0) \Subset \Omega$ is a ball for which we have
  \begin{equation}
    \mathbb{E}_p[u;\ball_R(x_0)] < \sg,
  \end{equation} 
  then $u$ is of class $\hold^{1,\alpha}$ in $\ball_{\frac R2}(x_0).$
\end{theorem}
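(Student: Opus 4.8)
The plan is to run the standard $\varepsilon$-regularity iteration of Section~\ref{sec:eps_reg}, but to show that under the additional assumptions the excess decay estimate \eqref{eq:decay1} can be iterated \emph{uniformly in the base point}, so that the condition $|(\nabla u)_{\ball_R(x_0)}|<m$ never needs to be imposed as a separate smallness hypothesis. First I would revisit the derivation of \eqref{eq:pre-iterate}. The key point is that \eqref{eq:ucontrolled_growth} allows the estimates of Lemma~\ref{lem:Faux} to be taken with constants $L_m,\nu_m$ \emph{independent of} $m$ — more precisely, one works with the rescaled shifted integrands $F_w^{(p)}(z):=(1+|w|)^{2-p}F_w((1+|w|)\,\cdot\,)$ evaluated at $z$, or equivalently replaces the modulus $\omega_m$ by a single modulus $\omega$ coming from the uniform continuity of $z\mapsto F''(z)/(1+|z|)^{p-2}$. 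Together with the hypothesis that \eqref{eq:pgrowth} holds with uniform $\ell$, this yields a version of the main theorem and of the Caccioppoli inequality \eqref{eq:Cacc2} with constant $c=c(n,N,p,L/\ell)$ and superlinear function $H$ depending only on $n,p,\omega$, both \emph{independent of $m$}.

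Next I would reexamine the term $\mathrm{II}$ in \eqref{eq:excess_bound1}–\eqref{eq:excess_I2bound}. In the original argument $\mathrm{II}$ was controlled using bounds on the $\mathcal{A}$-harmonic comparison map $\widetilde h$, and the threshold $m$ entered only through $|\nabla b|<m$ in order to apply the Caccioppoli inequality; now that Caccioppoli holds with $m$-independent constants, the chain \eqref{eq:harmonicbounds}–\eqref{eq:excess_I2bound} goes through verbatim with constants depending only on $n,N,p,L/\ell$. Hence \eqref{eq:pre-iterate} becomes
\begin{align*}
\mathbb{E}_{p}[u;\ball_{\sigma R}(x_{0})] \leq c\Big(\sigma^{-(n+p)}\,\Psi(\mathbb{E}_{p}[u;\ball_{R}(x_{0})]) + \sigma^{2} \Big)\mathbb{E}_{p}[u;\ball_{R}(x_{0})]
\end{align*}
with $c$ and $\Psi(t)=H(t)/t$ depending only on $n,N,p,L/\ell,\omega$ and with $\lim_{t\searrow0}\Psi(t)=0$. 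Choosing $\sigma$ with $2c\sigma^2\le\sigma^{1+\alpha}$ and then $\varepsilon$ so small that $\Psi(t)\le\sigma^{n+p+2}$ for $0<t<\varepsilon$, one obtains \eqref{eq:decay1}, namely $\mathbb{E}_p[u;\ball_{\sigma R}(x_0)]\le\sigma^{1+\alpha}\mathbb{E}_p[u;\ball_{R}(x_0)]$, \emph{without any hypothesis on the averaged gradient}.

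Finally I would iterate. Since $\sigma^{1+\alpha}<1$, if $\mathbb{E}_p[u;\ball_R(x_0)]<\varepsilon$ then by induction $\mathbb{E}_p[u;\ball_{\sigma^k R}(x_0)]\le\sigma^{k(1+\alpha)}\mathbb{E}_p[u;\ball_R(x_0)]<\varepsilon$ for all $k\ge0$; the smallness is self-improving and, crucially, no control on $|(\nabla u)_{\ball_{\sigma^k R}(x_0)}|$ is ever consumed. A standard interpolation between radii $\sigma^{k+1}R\le r\le\sigma^k R$ then upgrades this to $\mathbb{E}_p[u;\ball_r(x_0)]\lesssim (r/R)^{1+\alpha}\mathbb{E}_p[u;\ball_R(x_0)]$ for all $0<r\le R$, and the same decay holds with $x_0$ replaced by any $y$ in a fixed neighbourhood (shrinking $\varepsilon$ slightly and using absolute continuity of the excess in the base point). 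Translating to $|(\nabla u)_{\ball_r(y)}-(\nabla u)_{\ball_{r/2}(y)}|$ and summing a geometric series shows $(\nabla u)_{\ball_r(y)}$ is Cauchy as $r\searrow0$ with Hölder modulus $r^{(1+\alpha)/p}$ or so; together with the Campanato–Meyers characterisation applied on $\ball_{R/2}(x_0)$ this gives $u\in\hold^{1,\alpha'}(\ball_{R/2}(x_0);\R^N)$ for a suitable $\alpha'$, and relabelling $\alpha$ finishes the proof. Equivalently phrased through the decomposition of Theorem~\ref{thm:PR1}: the self-improvement of \eqref{eq:decay1} shows $x_0\notin\sing_{u,1}$, and since the decay never required finiteness of $\liminf_r|(\nabla u)_{\ball_r(x_0)}|$ the set $\sing_{u,2}$ plays no role.

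The main obstacle I anticipate is making the $m$-independence of the linearisation estimates fully rigorous: one must verify that \eqref{eq:ucontrolled_growth} (plus uniform $\ell$) really does yield Lemma~\ref{lem:Faux}\ref{item:Fprops1}--\ref{item:Fprops3} and the ensuing forms of the main theorem and of \eqref{eq:Cacc2} with a single modulus $\omega$ and a single pair of ellipticity constants, uniformly over all shift matrices $w$. This is a rescaling argument — write $F_w^{(p)}(z)=(1+|w|)^{2-p}\big(F(w+(1+|w|)z)-F(w)-(1+|w|)\langle F'(w),z\rangle\big)$, check that its second derivative is $(1+|w|)^{2-p}F''(w+(1+|w|)z)$, and use \eqref{eq:ucontrolled_growth} together with $|w+(1+|w|)z|\le(1+|w|)(1+|z|)$ to get uniform bounds and a uniform modulus — but it has to be carried out carefully for the $p$-growth regime, in particular tracking how the truncation/Sobolev steps of the main theorem's proof interact with the rescaling. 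Everything downstream (choice of $\sigma,\varepsilon$, the iteration, Campanato–Meyers) is then routine.
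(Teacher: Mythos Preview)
Your proposal is correct and follows essentially the same route as the paper: establish that under \eqref{eq:ucontrolled_growth} and uniform $\ell$ the Caccioppoli inequality and the $\mathcal{A}$-harmonic approximation theorem hold with constants independent of $|\nabla a|$, then rerun the excess decay argument of Section~\ref{sec:eps_reg} without ever imposing \eqref{eq:smallnessconds}$_{1}$. Two small points of divergence are worth noting. First, where you invoke a rescaled integrand $F_{w}^{(p)}$ (your scaling factor should be $(1+|w|)^{-p}$ rather than $(1+|w|)^{2-p}$ to get the stated second derivative), the paper instead derives the uniform shifted estimates $|F'_{\nabla a}(z)-F''(\nabla a)z|\leq \widetilde L(1+|\nabla a|^{2}+|z|^{2})^{\frac{p-2}{2}}\omega(|z|)|z|$ directly from \eqref{eq:ucontrolled_growth}; this is the same content. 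Second, your claim that the chain \eqref{eq:harmonicbounds}--\eqref{eq:excess_I2bound} for term $\mathrm{II}$ goes through \emph{verbatim} is slightly off in its justification: the $m$-dependence there does not enter through Caccioppoli (which was already applied in \eqref{eq:excess_bound1}) but through the use of Lemma~\ref{lem:Vaux}~\ref{item:Vprops2a}, whose constant depends on the bound $\mathtt{C}$ for $(b-h)/(\sigma R)$. Since $\mathtt{C}$ in turn depends only on $L_{m}/\nu_{m}$, your conclusion is still correct for $p\geq 2$. The paper sidesteps this by replacing Lemma~\ref{lem:Vaux}~\ref{item:Vprops2a} with Lemma~\ref{lem:Vaux}~\ref{item:Vprops6}, pulling out $\sigma^{\min\{2,p\}}$ directly from $V_{p}(\sigma\,\cdot\,)$ without any boundedness hypothesis; for $p\geq 2$ this yields the same $\sigma^{2}$ factor.
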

We emphasise in particular that $\sg$ is \emph{independent of $\lvert (\nabla u )_{\ball_R(x_0)}\rvert$}, and does not require any normalisation.
As an application, we can adapt the results of \textsc{Mingione} and the last named author  \cite{KristensenMingione} to deduce the following singular set estimate: 
\begin{corollary}\label{cor:uniform_singset}
  Let $F$ satisfy \eqref{eq:pgrowth} \textcolor{blue}{}with uniform $\ell$ and \eqref{eq:ucontrolled_growth} with $p \geq 2.$
  Then given $m>0,$ there is $\delta = \delta(n,N,p,F,m)>0$ such that the following holds: 
  Suppose $u \in \sobo^{1,p}_{\locc}(\Omega;\R^N)$ is a local minimiser of \eqref{eq:mainfct}, and we additionally have $\nabla u \in \mathrm{BMO}(\Omega;\R^{N\times n})$. 
  Then the singular set $\sing_u$ as defined in \eqref{eq:charsingset} is given by $\sing_{u}=\sing_{u,1}$ and satisfies the \emph{Hausdorff dimension estimate}
  \begin{equation}
    \dim_{\mathscr H}(\sing_{u}) < n-\delta.
  \end{equation} 
\end{corollary}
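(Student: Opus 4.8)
The plan is to follow the scheme of \textsc{Kristensen \& Mingione} \cite{KristensenMingione}, feeding in as new ingredients the \emph{gradient-average-free} $\varepsilon$-regularity and excess decay behind Theorem~\ref{thm:uniform_epsreg} and the a~priori higher integrability of $\nabla u$. The identity $\sing_u=\sing_{u,1}$ is the easy part: if $x_0\notin\sing_{u,1}$, then $\liminf_{r\searrow0}\mathbb{E}_p[u;\ball_r(x_0)]=0$, so for some radius $r$ with $\ball_r(x_0)\Subset\Omega$ one has $\mathbb{E}_p[u;\ball_r(x_0)]<\sg$, where $\sg$ is the threshold from Theorem~\ref{thm:uniform_epsreg} --- the decisive point being that this threshold involves \emph{no} smallness condition on $|(\nabla u)_{\ball_r(x_0)}|$. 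Hence $u\in\hold^{1,\alpha}(\ball_{r/2}(x_0))$, so $x_0\in\reg_u$; this gives $\sing_u\subseteq\sing_{u,1}$, and the reverse inclusion is immediate since on $\reg_u$ the gradient is continuous, whence $\mathbb{E}_p[u;\ball_r(\cdot)]\to0$.

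The heart of the matter is an a~priori fractional differentiability estimate for the vector-valued nonlinearity $\mathbf V(z):=(1+|z|^2)^{\frac{p-2}{4}}z$, which for $p\ge2$ satisfies $|\mathbf V(z)|^2\simeq_p V_p(z)$ and $|\mathbf V(z)-\mathbf V(w)|^2\gtrsim_p V_p(z-w)$ (by Lemma~\ref{lem:Vaux}~\ref{item:Vprops2}): namely, that there is $\beta=\beta(n,N,p,F,m)\in(0,1)$ with $\mathbf V(\nabla u)\in\sobo^{\beta,2}_{\locc}(\Omega;\R^{N\times n})$. I would obtain this by adapting the relevant iteration of \cite{KristensenMingione}: Gehring's lemma first yields $\gamma>1$ with $\nabla u\in\lebe^{p\gamma}_{\locc}(\Omega)$, i.e. $V_p(\nabla u)\in\lebe^{\gamma}_{\locc}(\Omega)$; then one combines, at each dyadic scale $\ball_r(y)\subset\ball_R(x_0)$, the harmonic approximation (our main theorem) together with the Caccioppoli inequality~\eqref{eq:Cacc2} --- which on the ``good'' balls, where the excess is below the decay threshold, produce the geometric decay $\mathbb{E}_p[u;\ball_{\tau r}(y)]\le C\tau^{\kappa}\mathbb{E}_p[u;\ball_r(y)]$ for a fixed $\kappa\in(1,2)$ --- with the higher integrability to control the measure of the ``bad'' balls by Chebyshev against $\int_{\ball_{2R}}V_p(\nabla u)^{\gamma}\dif x$; summing over all scales gives
\[
  \int_{\ball_R(x_0)}\!\!\int_{\ball_R(x_0)}\frac{|\mathbf V(\nabla u)(x)-\mathbf V(\nabla u)(y)|^2}{|x-y|^{n+2\beta}}\dif x\,\dif y<\infty .
\]
It is exactly here that the two standing hypotheses of this subsection play their role and must be kept dimension-free: the uniformly controlled growth~\eqref{eq:ucontrolled_growth} lets one bound the coefficient freezing error by $\omega_F(\cdot)\,(1+|\nabla u|)^{p-2}$ with $\omega_F$ the modulus from~\eqref{eq:ucontrolled_growth}, while $\nabla u\in\mathrm{BMO}(\Omega)$, via John--Nirenberg, controls the normalised oscillation $\dashint_{\ball_r(x_0)}|\nabla u-(\nabla u)_{\ball_r(x_0)}|$ uniformly in $x_0$ and $r$, so that the decay constants $C,\kappa$ and the resulting $\beta$ do not deteriorate as $|\nabla u|$ grows.

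Granting $\mathbf V(\nabla u)\in\sobo^{\beta,2}_{\locc}$, the conclusion is routine. The set of points at which $\mathbf V(\nabla u)$ fails to have a Lebesgue value has vanishing $\mathscr H^{n-2\beta}$-measure by the standard capacitary estimate for fractional Sobolev functions, hence Hausdorff dimension at most $n-2\beta$. At any Lebesgue point $x_0$ of $\mathbf V(\nabla u)$ one has $\mathbb{E}_p[u;\ball_r(x_0)]\to0$: indeed $\mathbf V$ is a homeomorphism of $\R^{N\times n}$, so $x_0$ is an $\lebe^1$-Lebesgue point of $\nabla u$, which upgrades to convergence of the $V_p$-excess using $\nabla u\in\lebe^{p\gamma}_{\locc}$ together with $|\mathbf V(z)-\mathbf V(w)|^2\gtrsim_p V_p(z-w)$. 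Consequently $\sing_{u,1}$ lies in the non-Lebesgue set of $\mathbf V(\nabla u)$, so $\dim_{\mathscr H}(\sing_u)=\dim_{\mathscr H}(\sing_{u,1})\le n-2\beta$, and taking $\delta:=\beta>0$ yields $\dim_{\mathscr H}(\sing_u)\le n-2\delta<n-\delta$ as claimed. The main obstacle is clearly the fractional differentiability estimate with a \emph{dimension-free} exponent $\beta$ --- this is where the interplay of~\eqref{eq:ucontrolled_growth} with the BMO-hypothesis is genuinely used and is the new point relative to \cite{KristensenMingione}; the good/bad scale dichotomy and the measure-theoretic conclusion in the last paragraph are standard.
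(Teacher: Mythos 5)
Your route is genuinely different from the paper's, and it has a gap at its central step. The paper does not go through fractional differentiability at all: it establishes the Carleson condition
\begin{equation*}
  \dashint_{\ball_R(x_0)}\int_0^R \dashint_{\ball_R(x_0)} \mathbb{E}_p[u;\ball_r(x)]\,\frac{\dif r}{r}\,\dif x \leq C_0,
\end{equation*}
by inserting the first-order Taylor polynomial into the Caccioppoli inequality \eqref{eq:Cacc2} and then invoking Dorronsoro's estimate together with the John--Nirenberg inequality (this is precisely where $\nabla u\in\mathrm{BMO}$ is consumed). Combined with the average-free $\varepsilon$-regularity of Theorem~\ref{thm:uniform_epsreg}, the Carleson bound yields, exactly as in \cite[Theorem 5.1]{KristensenMingione}, that $\sing_u\cap\Omega'$ is $(\lambda,\kappa)$-porous, and the dimension drop $\dim_{\mathscr H}(\sing_u)<n-\delta$ is then a purely metric-geometric consequence of porosity. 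Your identification $\sing_u=\sing_{u,1}$ via the average-free threshold is correct and is the same observation the paper relies on.

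The gap is in your derivation of $\mathbf V(\nabla u)\in\sobo^{\beta,2}_{\locc}$. First, the mechanism you propose --- geometric excess decay on ``good'' balls plus a Chebyshev bound, against $\int V_p(\nabla u)^{\gamma}$, on the measure of ``bad'' balls --- does not control the dyadic sum $\sum_j 2^{2\beta j}\int \mathbb{E}_p[u;\ball_{2^{-j}}(x)]\,\dif x$ that a Besov/Gagliardo seminorm bound requires: on the bad balls the excess is only bounded by the local energy, and Gehring's higher integrability gives no summability in the scale parameter with a positive weight $2^{2\beta j}$. What is needed is precisely a packing (Carleson) condition on the excess over all scales and locations, and that is exactly what the BMO hypothesis provides via Dorronsoro --- so your argument, if completed, would have to re-derive the paper's Carleson estimate anyway; your stated role for BMO (``keeping the decay constants from deteriorating'') is instead played by the uniform-$\ell$ and \eqref{eq:ucontrolled_growth} hypotheses. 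Second, in \cite{KristensenMingione} the fractional differentiability estimates are obtained by difference-quotient/variational arguments for \emph{Lipschitz} minimisers, not from the excess-decay dichotomy; you are conflating two separate parts of that paper, and under the present hypotheses ($\nabla u\in\mathrm{BMO}$ only) the difference-quotient route is not available either. The capacitary endgame (non-Lebesgue set of a $\sobo^{\beta,2}$ map has dimension $\leq n-2\beta$) is fine, but it rests on an unproven input; the porosity argument reaches the same conclusion without it.
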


\begin{proof}[Proof of Theorem {\ref{thm:uniform_epsreg}}]
Given an affine-linear map $a\colon\R^{n}\to \R^{N}$, we recall from~\eqref{eq:linearisedintegrands} the linearised integrand $F_{\nabla a}$, which can be written as
\begin{equation}\label{eq:linearisedintegrands2}
  F_{\nabla a}(z) = \int_0^1 (1-t)F''(\nabla a + t z) [z,z] \dif x,\qquad z\in\R^{N\times n}.
\end{equation}
We then record the following estimates on $F_{\nabla a}$: For all $z\in\R^{N\times n}$ we have 
  \begin{align}
    \lvert F_{\nabla a}(z)\vert & \leq \widetilde L (1 + \lvert \nabla a\rvert^2 + \lvert z\rvert^2)^{\frac{p-2}2}\lvert z\rvert^2,\label{eq:uniform_shifted_growth}\\
    \lvert F_{\nabla a}'(z)\vert & \leq \widetilde L (1 + \lvert \nabla a\rvert^2 + \lvert z\rvert^2)^{\frac{p-2}2}\lvert z\rvert,\label{eq:uniform_shifted_growth2}\\
    \lvert F_{\nabla a}'(z) - F''(\nabla a) z \rvert & \leq \widetilde L (1 +\lvert \nabla a\rvert^2 + \lvert z\rvert^2)^{\frac{p-2}2} \omega(\lvert z\rvert) \lvert z\rvert,\label{eq:uniform_shifted_cts}
  \end{align}
  where $\omega \colon [0,\infty) \to [0,1]$ is a non-decreasing concave function such that $\lim_{t \searrow 0} \omega(t) = 0$.
  Here $\widetilde{L}>0$ is a constant only depending on $n, N, p$ and the upper bound for  \eqref{eq:ucontrolled_growth}, which we will denote by $L_1$, and $\omega$ depends only on the modulus of continuity of \eqref{eq:ucontrolled_growth} which we denote by $\widetilde\omega.$
  Then \eqref{eq:uniform_shifted_growth} follows from the boundedness assumption and \eqref{eq:linearisedintegrands2}, whereas \eqref{eq:uniform_shifted_growth2} is established by estimating similarly as in \eqref{eq:firstderivativebound} and employing \eqref{eq:ucontrolled_growth}. 
For \eqref{eq:uniform_shifted_cts} we will use the pointwise estimate
\begin{align}\label{eq:EricJohnson}
\begin{split}
     \lvert F''(z) - F''(w) \rvert &\leq (1+\lvert z \rvert)^{p-2} \left\lvert\frac{F''(z)}{(1+\lvert z\rvert)^{p-2}} - \frac{F''(w)}{(1+\lvert w\rvert)^{p-2}} \right\rvert \\
      &+ \frac{\lvert F''(w) \rvert}{(1+\lvert w \rvert)^{p-2}}  \underbrace{|(1+|z|)^{p-2}-(1+|w|)^{p-2}|}_{=:G(z,w)}. 
      \end{split}
\end{align}
Evidently $G(z,w)\leq 2(1+|z|+|w|)^{p-2}$, and assuming that $p>2$ and that $|z|\neq |w|$ we can also estimate
\begin{align*}
G(z,w) & \leq \int_{1+|w|}^{1+|z|} \frac{\dif}{\dif s} s^{p-2}\dif s \leq (p-2) (1+|z|+|w|)^{p-2}|z-w|,
\end{align*}
distinguishing between the cases $2 < p < 3$ and $p \geq 3.$ 
Hence \eqref{eq:EricJohnson} implies, upon enlarging $\widetilde{L}$ and setting $\omega(t) = \widetilde\omega(t) + \min\{1,t\}$, that 
\begin{align}\label{eq:intheairtonight} 
\lvert F''(z) - F''(w) \rvert & \leq \widetilde{L}\omega(|z-w|)(1+|z|+|w|)^{p-2}\qquad\text{for all}\;z,w\in\R^{N\times n}. 
\end{align}
Finally, to arrive at \eqref{eq:uniform_shifted_cts}, we start as in \eqref{eq:joestriani} and employ \eqref{eq:intheairtonight}. 
  Moreover by Lemma~\ref{lem:Faux}~\ref{item:Fprops3} we have the Legendre-Hadamard condition
  \begin{equation}\label{eq:uniform_shifted_lh}
    F''(\nabla a)[\xi\otimes\eta,\xi\otimes\eta] \geq \ell (1+\lvert z_0\rvert^2)^{\frac{p-2}2} \lvert \xi\rvert^2\lvert\eta\rvert^2
  \end{equation} 
  for all $\xi \in \R^N,$ $\eta \in \R^n,$
  and directly from $\eqref{eq:pgrowth}_2$ we deduce that
  \begin{equation}\label{eq:uniform_shifted_qc}
    \fint_{\ball} F_{\nabla a}(\nabla \varphi)  \dif x \geq c_p\ell \fint_{\ball} (1+ \lvert \nabla a\rvert +\lvert \nabla \varphi\rvert )^{\frac{p-2}2} \lvert \nabla \varphi\rvert^2 \dif x.
  \end{equation} 
  Using \eqref{eq:uniform_shifted_growth} and \eqref{eq:uniform_shifted_qc} we can establish the Caccioppoli inequality of the second kind, namely \eqref{eq:Cacc2}, in the usual way.
  However the associated constant $C$ now depends only on $n, N, p, \widetilde L, \ell;$ in particular it is independent of our choice of $\nabla a.$
  Similarly if $L_m, \nu_m$ are as in Lemma~\ref{lem:Faux}, we can ensure the quotient $L_m/\nu_m$ depends on $p, \widetilde L, \ell$ only.
  Hence by Remark~\ref{rem:thm_constant}, we have the constants in the $\mathcal{A}$-harmonic approximation theorem can also be chosen to be independent of $m.$
  Now we can carry out the excess decay argument as below; here we no longer need to estimate the affine maps $a(x),$ $b(x)$ so letting $h$ as in \eqref{eq:hdef} we can estimate for $\sigma \in (0,\frac13)$
  \begin{equation}
    \mathbb{E}_p[u;\ball_{\sigma R}(x_0)] \leq \frac{c}{\sigma^{n+\max\{2,p\}}} H\left( \mathbb{E}_p[u;\ball_{R}(x_0)]\right) + c \dashint_{\ball_{2\sigma R}(x_0)} V_p\left(\frac{b-h}{\sigma R}\right) \dif x,
  \end{equation}
  using \eqref{eq:excess_bound1}, \eqref{eq:excess_I1bound}, where we use the Caccioppli inequality and $\mathcal{A}$-harmonic approximation results.
  For the second term a slight modification is necessary; \eqref{eq:harmonicbounds} continues to hold true, and we will argue similarly as in \eqref{eq:excess_I2bound}.
  The only difference is that we will not use the equivalence $V_p(z) \simeq \lvert z\vert^2$ for $\lvert z \rvert \lesssim m,$ and instead use Lemma~\ref{lem:Vaux}~\ref{item:Vprops6} to pull out the factor $\sigma.$
  That is, for $1 < r < \min\{2,p\}$ we estimate
  \begin{equation}
    \begin{split}
      \dashint_{B_{2\sigma R}(x_0)} V_p\left(\frac{b-h}{\sigma R}\right) \dif x
      &\leq V_p\left( \sup_{x \in \ball_{2\sigma R}(x_0)} \left\lvert \frac{b(x)-h(x)}{\sigma R}\right\rvert \right) \\
      &\leq c V_p\left( \sigma\left( \dashint_{\ball B_R(x_0)} \lvert \nabla \tilde h\rvert^r \dif x\right)^{\frac1r}\right)\\
      &\leq c \sigma^{\min\{2,p\}} V_p\left( \left(\dashint_{\ball_R(x_0)} \lvert\nabla\tilde u\rvert^r \dif x \right)^{\frac1r}\right) \\
      &\leq c \sigma^{\min\{2,p\}} \mathbb{E}_p[u;\ball_R(x_0)],
    \end{split}
  \end{equation}
  from which we can deduce the slightly modified excess decay estimate
  \begin{align}
    \mathbb{E}_{p}[u;\ball_{\sigma R}(x_{0})] \leq c\Big(\frac{1}{\sigma^{n+\max\{2,p\}}}\Psi(\mathbb{E}_{p}[u;\ball_{R}(x_{0})]) + \sigma^{\min\{2,p\}} \Big)\mathbb{E}_{p}[u;\ball_{R}(x_{0})],
  \end{align}
  where $c=c(n,N,p,\widetilde L,\ell)>0,$ and $\Psi$ depends on $n,p$ and $\omega$ only.
  Now iterating this gives the desired conclusion.
\end{proof}
\begin{proof}[Proof of Corollary {\ref{cor:uniform_singset}}]
This will follow from the Carleson condition
\begin{equation}\label{eq:carleson_condition}
  I:= \fint_{\ball_R(x_0)} \int_0^R \fint_{\ball_R(x_0)} \mathbb E_p [u;\ball_{r}(x)] \, \frac{\dif r}r \dif x \leq C_0,
\end{equation} 
from which one can argue exactly as in \cite[Theorem 5.1]{KristensenMingione} to show $\Omega' \cap \Sigma_u$ is $(\lambda,\kappa)$-porous for all $\Omega' \Subset \Omega,$ with $\lambda = \lambda(p,C_0,\sg)$ where $C_0 > 0$ is as above and $\sg>0$ is as in Theorem \ref{thm:uniform_epsreg}.

To show \eqref{eq:carleson_condition}, we choose
\begin{equation}
    a(x) = (u)_{\ball_{2R}(x_0)} + (\nabla u)_{\ball_{2R}(x_0)} \cdot (x-x_0)
\end{equation}
in \eqref{eq:Cacc2}, which together with~Lemma~\ref{lem:Vaux}~\ref{item:Vprops1} gives
\begin{equation}
  I \leq C \fint_{\ball_R(x_0)} \int_0^R \fint_{\ball_R(x_0)}\left( \lvert u - a \rvert^2 + \lvert u - a \rvert^p \right)\, \frac{\dif r}r \dif x.
\end{equation}
Now we apply the estimate of \textsc{Dorronsoro} proved in \cite[Lemma 2.2]{KristensenMingione} to arrive at
\begin{equation}
\begin{split}
  I &\leq C \fint_{\ball_4R(x_0)} \left(\lvert \nabla u - (\nabla u)_{\ball_{4R}(x_0)}\rvert^2 + \lvert \nabla u - (\nabla u)_{\ball_{4R}(x_0)}\rvert^p\right) \dif x \\
    &\leq C \left([\nabla u]_{\mathrm{BMO}(\ball_{4R}(x_0))}^2 + [\nabla u]_{\mathrm{BMO}(\ball_{4R}(x_0))}^p \right),
\end{split}
\end{equation}
where we have used the John-Nirenberg inequality (see for instance \cite[Theroem 2.11]{Giusti}) in the last line.
We remark that in \cite{KristensenMingione} the estimate of Dorronsoro was only proven in the case $p=2,$ however the corresponding estimate in $\lebe^p$ for $p \geq 2$ follows by applying the Mikhlin multiplier theorem in place of the Plancherel theorem.
This establishes \eqref{eq:carleson_condition}, from which the result follows.
\end{proof}

{\small
\textbf{Acknowledgment.} F.G. gratefully acknowledges fincancial support through the Hector Foundation, Project No. FP 626/21. C.I. was supported by the EPSRC [EP/L015811/1].
}
\section*{Appendix}
For completeness, we use this appendix to give the quick argument for the $\lebe^{p}$-bounds displayed in Lemma \ref{lem:Lpbounds}, and an outline of the claim in Remark \ref{rem:gradientbounds} when $p \geq 2$.
\begin{proof}[Sketch of proof of Lemma \ref{lem:Lpbounds}]
For~\ref{item:Lpbound2}, \ref{item:Lpbound1}, the case $2 \leq p < \infty$ is shown in \cite[Chapter 10]{Giusti} using the method of Stampacchia.
In both cases we can extend this to the range $1 < p < 2$ by means of a duality argument.
Here it suffices to establish an a-priori estimate, for which we assume the right-hand side $G$ and $H$ respectively lie in $L^2,$ and let $v,w$ be the respective weak solutions.
For~\ref{item:Lpbound1} we choose $\varphi$ to be the weak solution to the dual problem
\begin{align*}
\begin{cases} 
-\mathrm{div}(\mathcal{A}^*\nabla \varphi ) = -\mathrm{div}\left( \lvert\nabla v\rvert^{p-2}\nabla v\right) & \text{in}\;\ball_{1}(0),\\ 
\varphi= 0&\text{on}\;\partial\!\ball_{1}(0), 
\end{cases} 
\end{align*}
where $\mathcal{A}^*[\xi,\zeta] = \mathcal{A}[\zeta,\xi]$ for all $\xi,\zeta \in \mathbb R^{Nn}.$
Then testing the equation for $v$ with $\varphi$ and applying~\ref{item:Lpbound1} with $p'\geq 2$ we deduce that
\begin{equation}
  \begin{split} 
    &\int_{\ball_1(0)} \lvert \nabla v\rvert^p \dif x = \int_{\ball_1(0)} \langle \mathcal{A}\nabla v, \nabla \varphi \rangle \dif x = \int_{\ball_1(0)} \langle \nabla\varphi, H \rangle \dif x \\
    &\quad\lesssim \lVert \nabla\varphi \rVert_{\lebe^{p'}(\ball_1(0)}\lVert G \rVert_{\lebe^{p}(\ball_1(0)} \lesssim \lVert \nabla v \rVert_{\lebe^{p}(\ball_1(0)}^{p-1}\lVert H \rVert_{\lebe^{p}(\ball_1(0)}.
  \end{split}
\end{equation}
By using this a-priori estimate and regularising $G,$ unique solvability follows.
Similarly for~\ref{item:Lpbound2} we choose $\psi \in (W^{1,p'}_0 \cap W^{2,p'}(\ball_1(0),\mathbb R^{n^2N})$ to be the weak solution to the dual problem
\begin{align*}
\begin{cases} 
-\mathrm{div}(\mathcal{A}^*\nabla \psi ) = -\lvert\nabla^2w\rvert^{p-2}\nabla^2w & \text{in}\;\ball_{1}(0),\\ 
\psi= 0&\text{on}\;\partial\!\ball_{1}(0).
\end{cases} 
\end{align*}
Then we can estimate
\begin{equation}
\begin{split}
    & \int_{\ball_1(0)} \lvert\nabla^2 w\rvert^p \dif x = \int_{\ball_1(0)} \langle \nabla^2 w, -\mathrm{div}(\mathcal{A}^*\nabla\psi)\rangle \dif x = \int_{\ball_1(0)} \langle H I_n, \nabla^2 \psi\rangle \dif x \\
    &\quad\lesssim \lVert \nabla^2\psi \rVert_{\lebe^{p'}(\ball_1(0)}\lVert H \rVert_{\lebe^{p}(\ball_1(0)} \lesssim \lVert \nabla^2 w \rVert_{\lebe^{p}(\ball_1(0)}^{p-1}\lVert H \rVert_{\lebe^{p}(\ball_1(0)}.
\end{split}
\end{equation}
Finally~\ref{item:Lpbound3} is a standard interior regularity result; we refer to \cite[Proposition 2.10]{CFM} for an elementary proof.
\end{proof}

\begin{proof}[Proof of Remark {\ref{rem:gradientbounds}} for $p \geq 2$]
   Recalling that $p^{\#} = \frac{np}{n-1},$ we will also write $p_{\#} = \frac{np}{n+1}.$
  If $p \geq 2,$ using the same notation and rescaling as in the above proof we choose $\Phi$ to solve the auxiliary system
  \begin{equation}\label{eq:gradient_aux_pgeq2}
    \begin{cases} 
      -\mathrm{div}\big(\mathcal{A}\nabla\Phi\big) = -\mathrm{div}\big(\lvert \nabla \Psi\rvert^{s-2} \nabla\Psi\big)&\;\text{in}\;\ball,\\ 
      \Phi = 0&\;\text{on}\;\partial\!\ball, 
    \end{cases}
  \end{equation}
  where $s = 2_\{\#\} = \frac{2n}{n+1}.$ Then noting $\frac{1}{s-1} = \frac{n+1}{n-1},$ by~Lemma~\ref{lem:Lpbounds}~\ref{item:Lpbound1} we can estimate
  \begin{equation}\label{eq:gradient_aux_lpbound}
    \lVert \nabla\Phi\rVert_{\lebe^{\frac{np}{n-1}}(\ball)} \leq \lVert \nabla\Psi \rVert_{\lebe^{\frac{np}{n+1}}(\ball)}^{s-1}.
  \end{equation} 
  Hence proceeding similarly as in \eqref{eq:pgeq2_maincomparision} we get
  \begin{equation}
    \begin{split}
      \int_{\ball} \lvert\nabla \Psi\rvert^s \dif x 
      &\lesssim \int_{\ball} \omega_m(\lvert\nabla\widetilde U\rvert)\lvert\nabla\widetilde U\rvert^{p-1} \lvert\nabla\Phi\rvert \dif x \\
      &\leq \lVert\omega_m(\lvert\nabla\widetilde U\rvert) \rVert_{\lebe^{np}(\ball)} \lVert \nabla \widetilde U\rVert_{\lebe^p(\ball)}^{p-1} \lVert \nabla\Phi\rVert_{\lebe^{\frac{np}{n-1}}(\ball)} \\
      &\leq \rho_m\left( \int_{\ball} V_p(\nabla\widetilde U) \dif x \right)^{\frac{1}{np}} \left( \int_{\ball} V_p(\nabla\widetilde U) \dif x \right)^{\frac{p-1}p} \left( \int_{\ball} V_p(\nabla \Psi)^{\frac{n}{n+1}} \dif x \right)^{\frac{n-1}{np}}
    \end{split}
  \end{equation} 
  where $\rho_m = \omega_m \circ V_p^{-1}$ using Jensen's inequality, and we have used \eqref{eq:gradient_aux_lpbound} in the last line.
  Analogously to \eqref{eq:Vplarger2} we can estimate $\lVert \lvert\nabla\Psi\rvert\rVert_{\lebe^{\frac{np}{n+1}}(\ball)}^p \leq \left( \int_{\ball} V_p(\nabla\widetilde U) \dif x \right)^{\frac pq}$ for some $q > p$ if $p > 2,$ from which \eqref{eq:gradient_harmonixapprox} follows.
\end{proof}

\end{document}